\newcommand{\bC}{\mathbf{C}}
\newcommand{\cE}{\mathcal{E}}
\newcommand{\bF}{\mathbf{F}}
\newcommand{\bG}{\mathbf{G}}
\newcommand{\cG}{\mathcal{G}}
\newcommand{\cH}{\mathcal{H}}
\newcommand{\rH}{\mathrm{H}}
\newcommand{\cO}{\mathcal{O}}
\newcommand{\bQ}{\mathbf{Q}}
\newcommand{\cS}{\mathcal{S}}
\newcommand{\cT}{\mathcal{T}}
\newcommand{\cV}{\mathcal{V}}
\newcommand{\bZ}{\mathbf{Z}}
\newcommand{\cZ}{\mathcal{Z}}
\let\wt\widetilde
\let\ol\overline
\renewcommand{\phi}{\varphi}
\newcommand{\arxiv}[1]{\href{http://arxiv.org/abs/#1}{{\tt arXiv:#1}}}
\def\Ddots{\mathinner{\mkern1mu\raise\p@
\vbox{\kern7\p@\hbox{.}}\mkern2mu
\raise4\p@\hbox{.}\mkern2mu\raise7\p@\hbox{.}\mkern1mu}}
\DeclareMathOperator{\im}{im}
\DeclareMathOperator{\End}{End}
\DeclareMathOperator{\ad}{ad}
\DeclareMathOperator{\Spec}{Spec}
\DeclareMathOperator{\Hom}{Hom}
\newcommand{\GL}{\mathbf{GL}}
\numberwithin{equation}{section}
\newtheorem{theorem}[equation]{Theorem}
\newtheorem{proposition}[equation]{Proposition}
\newtheorem{lemma}[equation]{Lemma}
\newtheorem{corollary}[equation]{Corollary}
\theoremstyle{definition}
\newtheorem{rmk}[equation]{Remark}
\newenvironment{remark}[1][]{\begin{rmk}[#1] \pushQED{\qed}}{\popQED \end{rmk}}
\newtheorem{eg}[equation]{Example}
\newtheorem{defn}[equation]{Definition}
\renewcommand{\thesubsection}{%
  \ifnum\c@subsection<1 \@arabic\c@section
  \else \thesection.\@arabic\c@subsection
  \fi
}
\DeclareMathOperator{\chr}{char}
\newcommand{\Frob}{\mathrm{Frob}}
\newcommand{\der}{\mathrm{der}}
\newcommand{\cen}{\mathrm{cen}}
\newcommand{\tor}{\mathrm{tor}}
\newcommand{\sconn}{\mathrm{sc}}
\title{Residual irreducibility of compatible systems}
\date{\today}
\author{Stefan Patrikis}
\address{Department of Mathematics, The University of Utah, Salt Lake City, UT}
\email{\href{mailto:patrikis@math.utah.edu}{patrikis@math.utah.edu}}
\urladdr{\url{http://www.math.utah.edu/~patrikis/}}
\author{Andrew Snowden}
\address{Department of Mathematics, University of Michigan, Ann Arbor, MI}
\email{\href{mailto:asnowden@umich.edu}{asnowden@umich.edu}}
\urladdr{\url{http://www-personal.umich.edu/~asnowden/}}
\author{Andrew Wiles}
\address{Mathematical Institute, University of Oxford, Oxford}
\email{\href{mailto:wiles@maths.ox.ac.uk}{wiles@maths.ox.ac.uk}}
\thanks{AS was supported by NSF grants DMS-1303082 and DMS-1453893}
\begin{document}

\begin{abstract}
We show that if $\{\rho_{\ell}\}$ is a compatible system of absolutely irreducible Galois representations of a number field then the residual representation $\ol{\rho}_{\ell}$ is absolutely irreducible for $\ell$ in a density~1 set of primes. The key technical result is the following theorem: the image of $\rho_{\ell}$ is an open subgroup of a hyperspecial maximal compact subgroup of its Zariski closure with bounded index (as $\ell$ varies). This result combines a theorem of Larsen on the semi-simple part of the image with an analogous result for the central torus that was recently proved by Barnet-Lamb, Gee, Geraghty, and Taylor, and for which we give a new proof.
\end{abstract}

\maketitle
\tableofcontents

\section{Introduction}

The purpose of this paper is to prove the following theorem:

\begin{theorem}[Informal version]
In a compatible system of absolutely irreducible Galois representations, a density~1 set of residual representations are absolutely irreducible.
\end{theorem}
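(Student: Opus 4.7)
The plan is to combine three inputs. The technical theorem promised in the abstract provides, for each $\ell$ outside a density-zero set, an inclusion $\im(\rho_\ell) \subset K_\ell \subset G_\ell(\ol\bQ_\ell)$ with $K_\ell$ a hyperspecial maximal compact in the Zariski closure $G_\ell \subset \GL_n$ and $[K_\ell : \im(\rho_\ell)]$ bounded by some constant $N$ independent of $\ell$. A hyperspecial structure is the same data as a reductive group scheme $\cG_\ell$ over $\bZ_\ell$ (possibly after an unramified base change) with $\cG_\ell(\bZ_\ell) = K_\ell$; for $\ell$ in a further density-$1$ set, the $n$-dimensional representation also extends integrally, and we get an action of $\cG_\ell(\bF_\ell)$ on a reduction $\ol L_\ell = L_\ell / \ell L_\ell$ of a chosen $K_\ell$-stable lattice $L_\ell$.

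First I would establish absolute irreducibility of $\ol L_\ell$ as a $\cG_\ell(\bF_\ell)$-module. Absolute irreducibility of $\rho_\ell$ means $\ol L_\ell \otimes \ol\bQ_\ell$ is an irreducible $G_\ell$-module of some highest weight $\lambda_\ell$. Because the $\rho_\ell$ form a compatible system, $\lambda_\ell$ is controlled uniformly in $\ell$ (for example, $\|\lambda_\ell\|$ stays bounded by the compatibility of Frobenius eigenvalues), and $G_\ell$ has bounded rank. Standard modular representation theory of split reductive groups then gives that, for $\ell$ larger than an absolute constant depending only on this bound, the Weyl module of highest weight $\lambda_\ell$ reduces to an irreducible $\cG_\ell(\bF_\ell)$-module; hence $\ol L_\ell$ is absolutely irreducible.

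Second I would transfer this irreducibility from $\cG_\ell(\bF_\ell)$ down to $\im(\ol\rho_\ell)$. By the technical theorem, reduction modulo $\ell$ gives a subgroup $H_\ell \subset \cG_\ell(\bF_\ell)$ of index at most $N$. The classical lower bounds on the minimal index of a proper subgroup of a quasi-simple finite group of Lie type grow polynomially in $|\bF_\ell|$, so for $\ell \gg N$ any such $H_\ell$ contains the derived subgroup $\cG_\ell^{\der}(\bF_\ell)$. Since the central torus of $\cG_\ell$ acts by scalars on the absolutely irreducible representation $\ol L_\ell$, the derived subgroup already acts absolutely irreducibly; Burnside's criterion then gives absolute irreducibility of $\ol\rho_\ell$ on the density-$1$ set of primes.

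The main obstacle I anticipate is threading together the various density-$1$ hypotheses cleanly. Three independent density-zero exceptional sets appear: primes where the compatible system is ramified or lacks a hyperspecial integral model; small primes where modular representation theory of $\cG_\ell$ departs from the characteristic-zero picture (torsion in weights, failure of Weyl-module irreducibility); and primes where $\ell \leq N$ so the bounded-index subgroup need not contain $\cG_\ell^{\der}(\bF_\ell)$. A secondary subtlety is that bounded index in $\cG_\ell(\bF_\ell)$ does not give bounded index in the central-torus part, but this is harmless because the torus acts by scalars on the irreducible representation. Making these points quantitative using the uniformity provided by the compatible-system hypothesis is the technical heart of the argument.
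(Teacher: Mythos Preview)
Your overall architecture matches the paper's: invoke the technical theorem to get $\Gamma$ inside $\cG_\ell(\bZ_\ell)$ with bounded index, show the reduction remains irreducible for the full group $\cG_\ell(\bF_\ell)$, then descend to the bounded-index subgroup. The paper phrases the last two steps as ``$\End_\Gamma(\cV)\otimes\ol k \cong \End_\Gamma(\cV_{\ol k})$'' via an invariants argument (its Propositions~3.5 and~3.6), but the content is the same.

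There is, however, a genuine gap in your Step~2 when $G_\ell$ is disconnected, and your dismissal of the torus as ``harmless'' is exactly where it bites. Your claim that ``the central torus acts by scalars, so the derived subgroup already acts absolutely irreducibly'' uses Schur, but Schur applies to the center of $G_\ell$, not to the central torus of $G_\ell^\circ$; when the component group acts nontrivially on that torus, the torus is not central in $G_\ell$ and need not act by scalars. Concretely, take $G = (\bG_m\times\bG_m)\rtimes\bZ/2$ acting on $V=k^2$ by $(a,b)\mapsto\mathrm{diag}(a,b)$ and the swap of coordinates: $V$ is absolutely irreducible but $G^{\der}=1$. In such cases the irreducibility over $\cG_\ell(\bF_\ell)$ comes from the component group permuting $\cG_\ell^\circ$-constituents that are distinguished only by their central-torus characters, and for these characters to remain distinct on the reduction of $\im(\rho_\ell)$ you must control the index in the torus. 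So the torus bound is not a ``secondary subtlety'' you can discard; it is essential precisely in the disconnected case. (Your Step~1, phrased via ``highest weight $\lambda_\ell$'', already implicitly assumed connectedness.)

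The fix is to keep track of three ingredients simultaneously: $H_\ell$ contains the image of $\cG_\ell^{\sconn}(\bF_\ell)$ (your Landazuri--Seitz argument), $H_\ell$ surjects onto $\pi_0(\cG_\ell)$ (Zariski density plus Serre's bound on $\#\pi_0$), and $H_\ell$ has bounded index in the torus quotient (this \emph{is} granted by the technical theorem you invoked, which already incorporates the torus bound---so your own input supplies what you need, you just used it incorrectly). The paper's endomorphism-invariants formulation packages these three pieces uniformly and avoids the case split.
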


In the rest of the introduction, we state this theorem precisely, indicate the main idea of the proof, and discuss the connection to some previous work.

\subsection{The main theorem}

Let $F$ and $E$ be number fields. Let $\Gamma_F$ be the absolute Galois group of $F$, let $\Sigma$ be a set of places of $E$, and let $\{\rho_v\}_{v \in \Sigma}$ be a collection where $\rho_v \colon \Gamma_F \to \GL_n(E_v)$ is a continuous representation (for some $n$ independent of $v$). We say that the family $\{\rho_v\}$ is a \emph{compatible system} if there exists a finite set $S$ of places of $F$ and a finite multi-set $I$ of integers such that the following conditions hold:
\begin{itemize}
\item Let $w \not\in S$ be a place of $F$ with residue characteristic different from that of $v \in \Sigma$. Then $\rho_v$ is unramified at $w$, and the characteristic polynomial of $\rho_v(\Frob_w)$ has coefficients in $E$ and is independent of $v$.
\item Let $w \not\in S$ be a place of $F$ with residue characteristic equal to that of $v \in \Sigma$. Then $\rho_v$ is crystalline at $w$ with Hodge--Tate weights in the multi-set $I$.
\end{itemize}
We remark that often in the definition of a compatible system, a stronger condition is required on ``labeled'' Hodge--Tate weights, a refinement of the multi-set $I$ into multi-sets indexed by embeddings $F \to \ol{E}$ (see \cite[\S 5.1]{blggt}); for us, however, the coarser condition just given suffices. For a set of rational prime numbers $\Pi$, let $\Sigma(\Pi)$ be the set of places of $E$ above a prime in $\Sigma$. We will typically work with compatible systems indexed by sets of this form. For a representation $\rho$ over a $p$-adic field, we write $\ol{\rho}$ for the residual representation (with respect to some lattice). Whenever we refer to the \emph{density} of a set of places of a number field, we mean the Dirichlet density. We can now state the precise version of our theorem:

\begin{theorem} \label{thm:irr}
Let $\{\rho_v\}_{v \in \Sigma}$ be a compatible system of representations of $\Gamma_F$ with coefficients in $E$, where $\Sigma=\Sigma(\Pi)$ for some set $\Pi$ of rational primes of density~1. Suppose that each $\rho_v$ is semi-simple, and let $\rho_v \otimes \ol{E}_v = \bigoplus_{i=1}^r \rho_{v,i}^{\oplus m_{v,i}}$ be its decomposition into irreducible representations over $\ol{E}_v$. (We assume $\rho_{v,i}$ and $\rho_{v,j}$ are non-isomorphic for $i \ne j$.) Then there exists a density~1 subset $\Pi' \subset \Pi$ such that for $v \in \Sigma'=\Sigma(\Pi')$ the representations $\ol{\rho}_{v,i}$ are irreducible and pairwise non-isomorphic (meaning $\ol{\rho}_{v,i}$ and $\ol{\rho}_{v,j}$ are non-isomorphic for $v \in \Sigma'$ and $i \ne j$).
\end{theorem}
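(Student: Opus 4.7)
The plan is to reduce residual irreducibility to a modular representation-theoretic statement about algebraic representations of reductive groups, and then solve that statement for a density 1 set of primes using the paper's key technical theorem together with the compatibility hypothesis. First I would apply the key technical theorem (stated in the abstract): for each $v \in \Sigma$, let $G_v \subset \GL_{n,E_v}$ denote the Zariski closure of $\rho_v(\Gamma_F)$, and realize $\rho_v(\Gamma_F)$ as an open subgroup of a hyperspecial maximal compact $K_v \subset G_v(E_v)$ with $[K_v : \rho_v(\Gamma_F)]$ bounded independently of $v$. Writing $K_v = \cG_v(\cO_{E_v})$ for a smooth reductive model $\cG_v$ of $G_v$ and reducing modulo the maximal ideal yields a surjection $K_v \to \cG_v(k_v)$ onto a connected reductive group over the residue field.

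Since $\rho_v(\Gamma_F)$ is Zariski dense in $G_v$, each absolutely irreducible constituent $\rho_{v,i}$ of $\rho_v$ extends, after enlarging coefficients, to an absolutely irreducible algebraic representation $\tau_{v,i}$ of $G_v$, which admits an integral model as a representation of $\cG_v$. Up to the bounded-index gap between $\rho_v(\Gamma_F)$ and $K_v$---harmless once the residue field of $v$ is large enough, since an absolutely irreducible algebraic representation of a split reductive group over a sufficiently large finite field remains absolutely irreducible upon restriction to any subgroup of bounded index---the residual representation $\ol{\rho}_{v,i}$ is identified with the reduction $\ol{\tau}_{v,i}$ of $\tau_{v,i}$. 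Residual irreducibility and pairwise non-isomorphism of the $\ol{\rho}_{v,i}$ then reduce to the corresponding assertions for the $\ol{\tau}_{v,i}$ viewed as algebraic representations of $\cG_v \otimes k_v$.

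The crucial remaining step is globalization. Combining the compatible system hypothesis with Larsen's theorem on the semisimple part of the image and the central-torus analogue of Barnet-Lamb, Gee, Geraghty, and Taylor (reproved here), one identifies the data $(\cG_v, \tau_{v,i})$, for $v$ in a density 1 subset of $\Sigma$, with specializations of a common global model: a reductive group scheme $\cG$ over $\cO_E[1/N]$ for some $N$, equipped with a finite list of absolutely irreducible algebraic representations. The boundedness of highest weights required for such a finite list is controlled by the finite multi-set $I$ of Hodge-Tate weights, which bounds the action of the Hodge cocharacter on each $\tau_{v,i}$ uniformly in $v$. Once this globalization is in place, residual irreducibility and pairwise non-isomorphism of the $\ol{\tau}_{v,i}$ hold for all primes $\ell$ of $E$ outside a finite set: the Weyl module $V(\lambda)$ is irreducible in characteristic $\ell$ once $\ell$ exceeds a bound depending on the Coxeter number of $\cG$ and the height of $\lambda$, and Brauer-Nesbitt excludes accidental mod $\ell$ coincidences between distinct $\tau_{v,i}$ for large $\ell$.

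The hard part will be this globalization. Larsen's theorem controls the semisimple part of the identity component of $G_v$, but synchronizing the central torus (via the Barnet-Lamb-Gee-Geraghty-Taylor result) and handling the finite component group $G_v/G_v^\circ$ in a $v$-independent way are both required in order to land inside a single global $\cG$. It is exactly this globalization, requiring the combined strength of both cited results, that forces the density 1 hypothesis on $\Pi$ to enter the argument.
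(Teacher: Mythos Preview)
Your overall strategy---reduce to a statement about algebraic representations of a reductive group scheme over $\cO$, then use that the representation theory in characteristic $\ell$ matches characteristic $0$ once $\ell$ is large---is the same as the paper's. But your execution has a genuine gap in the ``globalization'' step, and this is not a technicality.

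You assert that, for $v$ in a density~1 set, the data $(\cG_v, \tau_{v,i})$ arise as specializations of a single reductive group scheme $\cG$ over $\cO_E[1/N]$ with a fixed finite list of algebraic representations. This is not known, and nothing in Larsen's theorem or the central-torus result furnishes it. Larsen--Pink type results control the formal character (the multiset of weights up to Weyl action) of $G_v^\circ$ independently of $v$, but they do not produce a common global model: the \emph{form} of the group (its Galois twist) can vary with $v$, and there is no mechanism here to trivialize that variation. Your appeal to the Hodge--Tate multiset $I$ to bound highest weights is also not quite right: $I$ bounds the Hodge cocharacter, not directly the weights of the $\tau_{v,i}$ with respect to a maximal torus of $G_v$.

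The paper avoids globalization entirely. It first passes to the $\bQ$-coefficient system $\rho_\ell = \bigoplus_{v \mid \ell} \rho_v$ and works with the single group $G_\ell/\bQ_\ell$ at each $\ell$. Proposition~\ref{prop:hyper} produces, for each $\ell$ in the density~1 set, a reductive $\cG_\ell/\bZ_\ell$ with $\rho_\ell(\Gamma_F) \subset \cG_\ell(\bZ_\ell)$ of uniformly bounded index---no claim that these $\cG_\ell$ fit together globally. The irreducibility is then extracted not by analyzing Weyl modules for a fixed group, but via the endomorphism-algebra statement of Theorem~\ref{genirred}: for a $\cG_\ell$-stable lattice $\cV$ in $\rho_v$, one shows $(\End(\cV)^\Gamma)_{\ol{k}} = \End(\cV_{\ol{k}})^\Gamma$ by sandwiching through $\cG_\ell$-invariants (Propositions~\ref{prop:inv1} and~\ref{prop:inv2}). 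The only uniformity needed is that $\dim(G_\ell)$, $\dim(V)$, $\#\pi_0(G_\ell)$, and the torus index are all bounded independently of $\ell$; these bounds are exactly what Serre's theorem, Larsen's theorem, and Theorem~\ref{thm:torimg} provide. The point is that ``bounded numerical invariants'' is much weaker than ``specialization of a global object,'' and suffices.
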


\begin{corollary}\label{cor:irr}
Suppose each $\rho_v$ is absolutely irreducible. Then $\ol{\rho}_v$ is absolutely irreducible for all $v \in \Sigma'$.
\end{corollary}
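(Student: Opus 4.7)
The plan is to deduce Corollary \ref{cor:irr} as an essentially immediate consequence of Theorem \ref{thm:irr}, the main work being to translate the theorem's conclusion about the irreducible constituents $\rho_{v,i}$ into a statement about $\ol{\rho}_v$ itself. First I would observe that under the hypothesis that each $\rho_v$ is absolutely irreducible, the decomposition $\rho_v \otimes_{E_v} \ol{E}_v = \bigoplus_{i=1}^r \rho_{v,i}^{\oplus m_{v,i}}$ collapses to the single summand $r=1$, $m_{v,1}=1$, so $\rho_{v,1} = \rho_v \otimes_{E_v} \ol{E}_v$. Absolute irreducibility implies semi-simplicity, so the hypothesis of Theorem \ref{thm:irr} holds, and the theorem yields a density-one subset $\Pi' \subset \Pi$ (with $\Sigma' = \Sigma(\Pi')$) on which $\ol{\rho}_{v,1}$ is irreducible.

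It then remains to relate $\ol{\rho}_{v,1}$ to $\ol{\rho}_v$. The point is that reduction modulo a uniformizer commutes with extension of scalars: any $\Gamma_F$-stable $\cO_{E_v}$-lattice $T \subset \rho_v$ extends to a $\Gamma_F$-stable $\cO_{\ol{E}_v}$-lattice $T \otimes_{\cO_{E_v}} \cO_{\ol{E}_v} \subset \rho_{v,1}$, and its residue is canonically identified with $\ol{\rho}_v \otimes_{k_v} \ol{k}_v$ (where $k_v$ denotes the residue field of $E_v$). Since $\ol{\rho}_{v,1}$ is well-defined only up to semisimplification but equals the semisimplification of the reduction of any lattice inside $\rho_{v,1}$ (by Brauer--Nesbitt), irreducibility of $\ol{\rho}_{v,1}$ forces $\ol{\rho}_v \otimes_{k_v} \ol{k}_v$ to be irreducible (the semisimplification of an irreducible is irreducible, and conversely irreducibility of the semisimplification implies irreducibility of the original, so the choice of lattice in $\rho_v$ is immaterial). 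This is precisely absolute irreducibility of $\ol{\rho}_v$.

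There is no real obstacle here: the substantive content is entirely in Theorem \ref{thm:irr}, and the corollary is a one-step unpacking. The only bookkeeping to be careful about is that ``irreducible'' for a residual representation is a well-defined condition independent of the lattice choice (again by Brauer--Nesbitt), so the conclusion does not depend on how one chooses the stable lattice used to define $\ol{\rho}_v$.
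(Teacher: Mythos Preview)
Your proposal is correct and matches the paper's approach: the paper states the corollary with no proof, treating it as immediate from Theorem~\ref{thm:irr}, and your argument is exactly the natural unpacking (specialize to $r=1$, $m_{v,1}=1$, then identify $\ol{\rho}_{v,1}$ with $\ol{\rho}_v \otimes_{k_v} \ol{k}_v$). The only cosmetic point is that $\cO_{\ol{E}_v}$ should be understood as the ring of integers in a sufficiently large finite extension (or the integral closure), but this does not affect the argument.
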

%
In the special case of Hodge--Tate regular compatible systems (see \cite[\S 5.1]{blggt}), this corollary is due to Barnet-Lamb, Gee, Geraghty, and Taylor (see \cite[Proposition 5.3.2]{blggt}), who have used it to powerful effect in establishing potential automorphy theorems for certain compatible systems of Galois representations over CM fields. As in their application, but now in greater generality, we obtain the following corollary:

\begin{corollary}
Suppose each $\rho_v$ is absolutely irreducible. Let $v$ be a place in $\Sigma'$ whose residue characteristic is at least $2(n+1)$. Then $\rH^1(\ol{\rho}_v(\Gamma_F), \mathrm{Ad}^0(\ol{\rho}_v))=0$. 
\end{corollary}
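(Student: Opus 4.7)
The plan is to combine Corollary \ref{cor:irr} with the main structural theorem described in the abstract (the image of $\rho_v$ is open in a hyperspecial maximal compact subgroup of its Zariski closure, with index bounded uniformly in $v$), and then invoke a standard vanishing theorem for the adjoint cohomology of finite groups of Lie type in large residue characteristic.

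First, I would observe that by Corollary \ref{cor:irr}, $\overline{\rho}_v$ is absolutely irreducible for $v \in \Sigma'$, so $\mathrm{Ad}^0(\overline{\rho}_v) = \mathfrak{sl}(\overline{\rho}_v)$ is a well-defined module of dimension $n^2-1$ over the residue field $k_v$. Let $G_v$ denote the Zariski closure of $\rho_v(\Gamma_F)$ in $\GL_{n/E_v}$; since $\rho_v$ is absolutely irreducible, the connected component $G_v^{\circ}$ is reductive. By the main theorem of the paper, the image $\rho_v(\Gamma_F)$ is an open subgroup of a hyperspecial maximal compact subgroup of $G_v(E_v)$, and the index is bounded by a constant $C$ independent of $v$. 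Consequently, there is a smooth reductive group scheme $\mathcal{G}_v$ over $\mathcal{O}_{E_v}$ with generic fiber $G_v^{\circ}$, and the reduction mod $v$ of $\rho_v(\Gamma_F)$ contains a subgroup $H_v^{\circ} \subset \overline{\rho}_v(\Gamma_F)$ of index at most $C$ that contains the derived group $\mathcal{G}_v^{\der}(k_v)^+$ (the subgroup generated by $\ell$-power elements).

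Next, I would apply a standard cohomology vanishing result for finite groups of Lie type with coefficients in (a direct summand of) the adjoint representation, of the sort proved by Cline--Parshall--Scott and used in precisely this form in \cite[Proposition 2.5.2 and Lemma 2.5.5]{blggt}: when $\ell \geq 2(n+1)$, one has
\[ H^1\bigl(H_v^{\circ},\, \mathrm{Ad}^0(\overline{\rho}_v)\bigr) = 0. \]
Here one uses that $\mathrm{Ad}^0(\overline{\rho}_v)$, viewed as a representation of $\mathcal{G}_v^{\der}(k_v)$, is a subquotient of $\mathfrak{gl}_n$ and that the bound $2(n+1)$ on the residue characteristic is exactly the threshold beyond which the relevant Weyl modules remain irreducible and their cohomology vanishes.

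Finally, I would pass from $H_v^{\circ}$ to the full image $H_v = \overline{\rho}_v(\Gamma_F)$ by inflation--restriction. Since $[H_v : H_v^{\circ}] \leq C$ is uniformly bounded while $\ell \geq 2(n+1)$ can be assumed (after enlarging the excluded set of $v$ by a density-zero set accounting for the finitely many small primes up to $C$) to be coprime to this index, the sequence
\[ 0 \to H^1(H_v/H_v^{\circ}, \mathrm{Ad}^0(\overline{\rho}_v)^{H_v^{\circ}}) \to H^1(H_v, \mathrm{Ad}^0(\overline{\rho}_v)) \to H^1(H_v^{\circ}, \mathrm{Ad}^0(\overline{\rho}_v))^{H_v/H_v^{\circ}} \]
collapses to give the desired vanishing.

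The main obstacle I anticipate is being careful about the precise bound $2(n+1)$: one has to confirm that the Cline--Parshall--Scott-type input applies to the particular reductive group $\mathcal{G}_v^{\der}$ that arises (which is a subgroup of $\GL_n$, hence of semisimple rank at most $n-1$) and to the specific representation $\mathrm{Ad}^0$ rather than to a more general module. Provided this matches the statement in \cite{blggt}, the only additional work is the bounded-index bookkeeping in the last step, which is routine.
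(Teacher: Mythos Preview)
Your approach is considerably more elaborate than the paper's, and takes a genuinely different route. The paper's proof is a single sentence: combine Corollary~\ref{cor:irr} (so $\ol{\rho}_v$ is absolutely irreducible) with \cite[Appendix, Theorem~9]{adequate}. That theorem of Guralnick--Herzig--Taylor--Thorne states that any absolutely irreducible subgroup of $\GL_n(\ol{\bF}_\ell)$ with $\ell \ge 2(n+1)$ is \emph{adequate}, and one of the defining conditions of adequacy is precisely the vanishing of $\rH^1$ with $\mathrm{Ad}^0$ coefficients. So the only input from this paper is the residual irreducibility; the hyperspecial structure of the image, the bounded-index statement, and the reductive model $\cG_v$ are not used at all.

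By contrast, you are in effect trying to reprove (a special case of) the GHTT adequacy theorem by invoking the fine structure of the image established here and then feeding it into Cline--Parshall--Scott--type vanishing. This is more work, and it introduces a genuine mismatch with the statement as written: your inflation--restriction step needs $\ell$ coprime to the index bound $C$, and you propose to arrange this by further shrinking $\Sigma'$. But the corollary is asserted for \emph{every} $v \in \Sigma'$ with residue characteristic at least $2(n+1)$, where $\Sigma'$ is the fixed set produced by Theorem~\ref{thm:irr}; you are not free to discard further primes. Since $C$ depends on the compatible system and need not be bounded by $2(n+1)$, your argument does not establish the corollary as stated. The GHTT theorem avoids this entirely because its hypothesis is only the irreducibility of $\ol{\rho}_v$ and the numerical bound on $\ell$, with no reference to any auxiliary index.
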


\begin{proof}
Combine Corollary \ref{cor:irr} with \cite[Appendix, Theorem 9]{adequate}.
\end{proof}

In the special case of abelian varieties---an example far removed from the case of regular compatible systems---our theorem yields by Faltings' theorem:

\begin{corollary} \label{cor:av}
Let $A$ be an abelian variety over $F$ with $\End(A)=\bZ$. Then the representation of $\Gamma_F$ on $A[\ell]$ is absolutely irreducible for $\ell$ in a set of primes of density~1.
\end{corollary}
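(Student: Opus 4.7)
The plan is to show that the compatible system $\{V_\ell(A)\}_\ell$ of rational Tate modules of $A$ satisfies the hypotheses of Corollary~\ref{cor:irr}, and then unwind to obtain the claim for the mod-$\ell$ representations on $A[\ell]$.

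First I would check that $\{V_\ell(A) = T_\ell(A) \otimes_{\bZ_\ell} \bQ_\ell\}_\ell$ is indeed a compatible system over the coefficient field $E = \bQ$, with $\Pi$ the set of all rational primes (density~$1$). The unramified-away-from-$S$ condition and the $\ell$-independence of the characteristic polynomial of Frobenius at primes of good reduction are due to Weil, and the crystalline condition at primes of residue characteristic $\ell$, with Hodge--Tate weights in the fixed multi-set $I = \{0,\ldots,0,1,\ldots,1\}$ ($g$ copies of each, where $g = \dim A$), follows from the work of Fontaine--Messing and Faltings on $p$-adic Hodge theory for abelian varieties.

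Next I would invoke Faltings' theorems: $V_\ell(A)$ is semi-simple as a $\Gamma_F$-representation, and
\[
\End_{\Gamma_F}(V_\ell(A)) = \End(A) \otimes_\bZ \bQ_\ell = \bQ_\ell,
\]
using the hypothesis $\End(A) = \bZ$. Decomposing $V_\ell(A) \otimes \ol{\bQ}_\ell = \bigoplus W_i^{\oplus m_i}$ with the $W_i$ absolutely irreducible and pairwise non-isomorphic, the endomorphism ring becomes $\bigoplus_i M_{m_i}(\ol{\bQ}_\ell)$, which has $\ol{\bQ}_\ell$-dimension $\sum_i m_i^2$; since this must equal $\dim_{\bQ_\ell}\End_{\Gamma_F}(V_\ell(A)) = 1$, there is a single summand with multiplicity one, i.e.\ each $V_\ell(A)$ is absolutely irreducible.

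Now Corollary~\ref{cor:irr} applies to the compatible system $\{V_\ell(A)\}_\ell$ and produces a density~$1$ set of rational primes $\Pi'$ such that for every $\ell \in \Pi'$, the residual representation $\ol{\rho}_\ell$ of $\Gamma_F$ on $T_\ell(A)/\ell T_\ell(A) = A[\ell]$ is absolutely irreducible. This is exactly the desired statement. There is really no obstacle beyond verifying the input hypotheses; the work has been done upstream, in Corollary~\ref{cor:irr} (hence in Theorem~\ref{thm:irr}) on the one hand and in Faltings' theorem on the other.
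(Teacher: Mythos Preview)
Your proposal is correct and follows exactly the approach the paper has in mind: the paper simply says the corollary ``yields by Faltings' theorem,'' and you have spelled out the details---verifying the compatible system axioms, invoking Faltings for semisimplicity and the endomorphism computation, and then applying Corollary~\ref{cor:irr}. There is nothing to add.
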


This corollary is a weaker version of a result of Zarhin (\cite[Corollary 5.4.5]{zarhin}) that identifies $\End(A[\ell])$ as $\End(A) \otimes_{\bZ} \bZ/\ell\bZ$ for all but finitely many primes $\ell$. The corollary can be generalized in an obvious way to allow $\End(A)$ to be an order in an arbitrary number field.

%

\subsection{Algebraic monodromy groups}

Let $G$ be an algebraic group. We write $G^{\circ}$ for the identity component of $G$, which we assume to be reductive. We write $G^{\der}$ for the derived subgroup of $G^{\circ}$, and $G^{\sconn}$ for the universal cover of $G^{\der}$, both of which are semi-simple. We let $G^{\tor}$ be the quotient $G^{\circ}/G^{\der}$, which is a torus. In a compatible system $\{\rho_v\}$, we write $G_v$ for the Zariski closure of the image of $\rho_v$. If $\rho_v$ is semi-simple then $G_v^{\circ}$ is reductive.

To prove Theorem~\ref{thm:irr}, it is enough to show that the image of $\rho_v$ in $G_v$ is sufficiently large. We think of $G_v$ as composed of three pieces: the component group $\pi_0(G_v)$, the semi-simple part $G_v^{\der}$, and toral part $G^{\tor}_v$. The image of $\rho_v$ surjects onto $\pi_0(G_v)$, and Serre showed that this group is independent of $\ell$. Larsen proved that the image of $\rho_v$ hits a big piece of $G_v^{\der}$. Our starting point is the observation that $\rho_v$ hits enough of the torus; this has been previously established in \cite[Proposition 5.2.2]{blggt}, but we hope that our argument is somewhat easier to follow. We have also included our argument because it contains a finiteness observation (see the summary after Corollary \ref{totalimage}) that may be of independent interest. Here is the technical result, proven in \S \ref{torus}:

\begin{theorem}[Compare \cite{blggt}, Proposition 5.2.2] \label{thm:torimg}
Let $\{\rho_{\ell}\}_{\ell \in \Sigma}$ be a compatible system of semi-simple representations of $\Gamma_F$ with coefficients in $\bQ$. Then there exists a positive integer $N$ with the following property: the projection of $\im(\rho_{\ell}) \cap G_{\ell}^{\circ}$ to $G^{\tor}_{\ell}$ has index at most $N$ in the maximal compact subgroup of $G^{\tor}_{\ell}(\bQ_{\ell})$, for all $\ell \in \Sigma$.
\end{theorem}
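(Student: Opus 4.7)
My plan is to reduce Theorem~\ref{thm:torimg} to a statement about a compatible system of continuous 1-dimensional characters $\Gamma_F \to \bar\bQ_\ell^\times$, and then to bound such characters using the finiteness of algebraic Hecke characters of $F$ with bounded infinity type and conductor.

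First, by Serre's theorem that the component group $\pi_0(G_\ell)$ is independent of $\ell$, I would replace $F$ by the fixed field of the common kernel of $\Gamma_F \to \pi_0(G_\ell)$; this is a finite extension, and it changes the index in question by a uniformly bounded factor. After this reduction one may assume $G_\ell$ is connected for every $\ell$, so that the map under consideration is $\rho_\ell^{\tor} \colon \Gamma_F \to T_\ell(\bQ_\ell)$ with $T_\ell := G_\ell^{\tor}$, and the task is to bound the index of its image in the unique maximal compact subgroup $K_\ell \subset T_\ell(\bQ_\ell)$.

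Next, I would pull back along characters. Each $\chi \in X^*(T_\ell)$ gives a continuous $\ell$-adic character $\psi_{\ell,\chi} := \chi \circ \rho_\ell^{\tor} \colon \Gamma_F \to \bar\bQ_\ell^\times$, and the desired bound is equivalent to a uniform bound on the joint index for $\chi$ running over a basis of $X^*(T_\ell)$. Because $G_\ell \hookrightarrow \GL_n$ is faithful, every such $\chi$ is realized as the determinant of a $G_\ell$-invariant line inside some tensor power $V^{\otimes a}\otimes(V^\vee)^{\otimes b}$ with $a,b$ bounded in terms of $n$; so each $\psi_{\ell,\chi}$ inherits the compatibility structure from $\{\rho_\ell\}$: its Frobenius values are $\ell$-independent algebraic numbers, its ramification is controlled by the fixed set $S$, and it is Hodge--Tate at places above $\ell$ with weights drawn from a fixed finite set depending only on $I$. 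Finally, I would invoke the theory of algebraic Hecke characters: a Hodge--Tate character of $\Gamma_F$ is locally algebraic, so by class field theory it corresponds, after a finite-order twist, to an algebraic Hecke character of $F$. For fixed conductor and fixed infinity type, such algebraic Hecke characters form a finitely generated abelian group of bounded rank, and the finite-order twists range over a finite group of bounded order (both bounds depending only on $F$, $S$, and $I$). Combined with the Zariski density of $\rho_\ell^{\tor}(\Gamma_F)$ in $T_\ell$, this forces the image in $K_\ell$ to have uniformly bounded index.

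The main obstacle is that $T_\ell$ is a priori a different $\bQ_\ell$-torus for each $\ell$, so uniformity must be extracted from strictly $\ell$-independent data. The essential ``finiteness observation'' alluded to in the introduction is presumably the assertion that algebraic Hecke characters of $F$ with bounded conductor and infinity type form a finitely generated abelian group whose rank and torsion part are both uniformly bounded in terms of the input data; everything genuinely new beyond Serre's theorem on $\pi_0$ is funneled through this classical statement together with $\ell$-adic Hodge theory. The delicate point to verify carefully is that the rank of $X^*(T_\ell)$ and the finite-order twist ambiguity can both be controlled by invariants of the compatible system (the algebraic $\ell$-independence of Frobenius eigenvalues, and the fixed multi-set $I$), not by $\ell$ itself.
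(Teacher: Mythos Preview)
Your overall plan coincides with the paper's, and you have located the key finiteness input; but one step is wrong as written and another elides the substantive work.

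The assertion that a \emph{basis} of $X^*(T_\ell)$ can be realized inside $V^{\otimes a}\otimes(V^\vee)^{\otimes b}$ with $a,b$ bounded purely in terms of $n$ is false. For $G_\ell=\bG_m\hookrightarrow\GL_2$ via $t\mapsto\diag(t^2,t^{2k+1})$ (faithful since $\gcd(2,2k+1)=1$), the generator of $X^*(T_\ell)\cong\bZ$ first appears in tensor degree $k+1$. The paper sidesteps this by not seeking a basis: it uses the determinants $\det(\sigma_i)$ of the irreducible constituents $\sigma_i$ of $\rho_\ell\otimes\ol{\bQ}_\ell$. These lie in $\bigwedge^{d_i}V$, hence have Hodge--Tate weights that are sums of at most $n$ elements of $I$, and collectively they define not an isomorphism but an \emph{isogeny} $G^{\tor}_\ell\to T_{\cS,\ell}$ onto the Zariski closure of their joint image, of degree at most $d_1\cdots d_r$. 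The finiteness of the set $\cH$ of possible Hecke characters then uses, beyond the conductor and infinity-type constraints you name, that the values at $\Frob_v$ for $v$ in a fixed finite set topologically generating $\Gamma_{F,S}^{\mathrm{ab}}$ are products of the $\ell$-independent eigenvalues of $\rho_\ell(\Frob_v)$; bounding conductor and infinity type alone gives only a finitely generated group, not a finite set.

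Your last inference (``Zariski density \ldots\ forces \ldots\ uniformly bounded index'') also skips the content. Even once the characters are known to come from a fixed finite $\cH$, the index bound is the work of Proposition~\ref{prop:bd}: local algebraicity produces a surjection $f_{\cS,\ell}\colon T_{F,\ell}\to T_{\cS,\ell}$ of unramified $\bZ_\ell$-tori from the Weil restriction $\mathrm{Res}_{F/\bQ}\,\bG_m$; the bounded Hodge--Tate numbers bound the torsion in $X^\bullet(T_{F,\ell})/f_{\cS,\ell}^*X^\bullet(T_{\cS,\ell})$; and an \'etale $H^1$ computation over $\Spec\bZ_\ell$ then bounds $[T_{\cS,\ell}(\bZ_\ell):f_{\cS,\ell}(T_{F,\ell}(\bZ_\ell))]$. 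Zariski density alone does not supply any of this.
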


Combined with a theorem of Larsen, we obtain (see the discussion after the proof of Proposition~\ref{prop:hyper}):

\begin{theorem}\label{totalimage}
Let $\{\rho_{\ell}\}_{\ell \in \Sigma}$ be a compatible system of semi-simple representations of $\Gamma_F$ with coefficients in $\bQ$, where $\Sigma$ is a set of primes of density~1. Then there exists a positive integer $N$ and a subset $\Sigma' \subset \Sigma$, also of density~1, such that for $\ell \in \Sigma'$ we can find a (not necessarily connected) reductive group scheme $\cG_{\ell}/\bZ_{\ell}$ with generic fiber $G_{\ell}$ such that $\cG_{\ell}(\bZ_{\ell})$ contains $\im(\rho_{\ell})$ as an open subgroup of index at most $N$.
\end{theorem}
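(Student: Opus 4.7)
The plan is to glue together integral models of the three pieces that compose $G_\ell$: the derived subgroup $G_\ell^{\der}$, the central torus $G_\ell^{\tor}$, and the component group $\pi_0(G_\ell)$. A theorem of Serre tells us that the $\Gamma_F$-set $\pi_0(G_\ell)$ is canonically independent of $\ell$, so in particular $|\pi_0(G_\ell)|$ is uniformly bounded and all the $\Gamma_F$-actions factor through a common finite quotient. Passing to the finite extension $F'/F$ that kills this action reduces the problem to constructing a reductive $\bZ_\ell$-model of $G_\ell^\circ$ containing $\im(\rho_\ell|_{\Gamma_{F'}})$ with bounded index; reinstating $\pi_0$ at the end via a semidirect-product construction costs only a bounded factor.

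For the connected case I would combine two inputs. By Larsen's theorem, on a density~1 subset of primes the image of $\rho_\ell$ inside $G_\ell^{\der}(\bQ_\ell)$ lies with uniformly bounded index in a hyperspecial maximal compact subgroup $K_\ell^{\der}$, which is $\cG_\ell^{\der}(\bZ_\ell)$ for a reductive integral model $\cG_\ell^{\der}$ of $G_\ell^{\der}$. By Theorem~\ref{thm:torimg}, the projection of $\im(\rho_\ell)\cap G_\ell^\circ(\bQ_\ell)$ to $G_\ell^{\tor}(\bQ_\ell)$ is open of bounded index in the (unique) maximal compact, which is $\cG_\ell^{\tor}(\bZ_\ell)$ for a canonical reductive integral model $\cG_\ell^{\tor}$ of $G_\ell^{\tor}$. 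To glue these, I would use the central isogeny $\pi \colon G_\ell^{\sconn} \times Z(G_\ell^\circ)^\circ \to G_\ell^\circ$. Since the root datum of $G_\ell^\circ$ is independent of $\ell$ across the compatible system (as visible from the Brauer-type comparison of semisimple parts of Frobenius), the order of its kernel $\mu_\ell$ is bounded by a single integer $N_0$; for every $\ell \nmid N_0$ outside a further finite set of bad primes, $\mu_\ell$ extends to a flat finite subgroup scheme of $\cG_\ell^{\sconn} \times \cG_\ell^{\tor}$, and the quotient is a reductive $\bZ_\ell$-model $\cG_\ell^\circ$ of $G_\ell^\circ$ containing $\im(\rho_\ell|_{\Gamma_{F'}})$ with uniformly bounded index. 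Forming the semidirect product with $\pi_0(G_\ell)$ then yields the desired (possibly disconnected) $\cG_\ell$.

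The main obstacle is precisely this final gluing step: one must verify that the combination of a bounded-index hyperspecial on $G_\ell^{\der}$ with a bounded-index maximal compact on $G_\ell^{\tor}$ really does produce a bounded-index hyperspecial on $G_\ell^\circ$, without index inflation. This is the content of Proposition~\ref{prop:hyper} as referenced in the statement. The only possible obstructions are of characteristic-$\ell$ nature---arising from the central isogeny and from ramification of $G_\ell^{\tor}$---and they are all controlled by a single integer uniformly across the system, so excluding a density~0 set of bad primes from $\Sigma$ is enough to make the construction succeed on a density~1 subset $\Sigma'$.
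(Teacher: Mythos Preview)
Your proposal is essentially the paper's own argument: combine Larsen's hyperspecial theorem, Theorem~\ref{thm:torimg}, and Serre's component-group result as inputs to the gluing construction of Proposition~\ref{prop:hyper}. The only notable difference is organizational: rather than passing to $F'$ to kill $\pi_0$ and then reattaching it by a ``semidirect product'' (note $G_\ell$ need not split as $G_\ell^\circ \rtimes \pi_0$), the paper's Proposition~\ref{prop:hyper} builds the disconnected $\cG_\ell$ in one step as a pushout of $\Gamma^\circ \to \Gamma$ along $\Gamma^\circ \hookrightarrow \cG_\ell^\circ$, after checking that conjugation by each $\gamma \in \Gamma$ extends to an automorphism of the integral model $\cG_\ell^\circ$; also, your appeal to $\ell$-independence of the root datum is stronger than needed, since a bound on $\dim G_\ell$ (trivially $\le n^2$) already bounds the order of the isogeny kernel.
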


We now indicate the main idea in the proof Theorem~\ref{thm:torimg}. It suffices to treat the case where $G_{\ell}$ is connected for all $\ell$. Suppose that $\rho_{\ell}$ decomposes as a direct sum of irreducible representations $\rho_{\ell}^1, \ldots, \rho_{\ell}^{n(\ell)}$. The determinants of these summands control $G^{\tor}_{\ell}$. In particular, if one knew that these determinants formed compatible systems, then the theory of abelian Galois representations would show that their images are large enough to prove the theorem. Unfortunately, there seems to be no way at present to prove this compatibility.

Our main observation is that one can quite easily prove something very close to this compatibility, and which suffices for our purposes. Precisely, we notice that there are only finitely many possibilities for (the Hecke character associated to) $\det(\rho_{\ell}^i)$: indeed, its ramification is constrained, and there are only finitely many possibilities for its infinity type and its value on any particular Frobenius element. It follows that one can partition $\Sigma$ into finitely many subsets $\Sigma_1, \ldots, \Sigma_r$ such that $\{\det(\rho_{\ell}^i)\}_{\ell \in \Sigma_k}$ forms a compatible system for all $i$ and $k$.

\begin{remark}
Larsen's result is very general: it applies to compatible systems of representations of any profinite group possessing certain ``Frobenius elements.'' By contrast, Theorem~\ref{thm:torimg} (or at least our proof of it) is specific to Galois groups of number fields, and make critical (though simple) use of $\ell$-adic Hodge theory.
\end{remark}

\section{The image in the torus}\label{torus}

We now prove Theorem~\ref{thm:torimg}. Fix the $d$-dimensional compatible system $\{\rho_{\ell}\}_{\ell \in \Sigma}$. By Serre's theorem, after replacing $F$ with a finite Galois extension, we can assume that $G_{\ell}$ is connected for all $\ell$. Let $S$ be the set of places of ramification of the compatible system, and let $I$ be the finite multi-set of integers occurring as Hodge--Tate weights. The abelianization of the Galois group $\Gamma_{F,S}$ is topologically finitely generated. Let $T$ be a finite set of places of $F$ such that the Frobenius elements $\Frob_v$ with $v \in T$ topologically generate $\Gamma_{F,S}$. Denote by $\ol{\bQ}$ the algebraic closure of $\bQ$ in $\bC$. Fix once and for all an isomorphism $\ol{\bQ}_{\ell} \cong \bC$ for each $\ell$, and use these to regard $\ol{\bQ}$ as a subfield of $\ol{\bQ}_{\ell}$. For $v \in T$, let $\lambda_{v,1}, \cdots, \lambda_{v,d} \in \ol{\bQ} \subset \ol{\bQ}_{\ell}$ be the eigenvalues of $\rho_{\ell}(\Frob_v)$; notice that this set is independent of $\ell$ by compatibility. Let $\cH$ be the set of algebraic Hecke characters $\chi$ of $F$ satisfying the following conditions: (1) $\chi$ is unramified away from $S$; (2) the weights of $\chi$ belong to the set of finite sums of distinct elements of (the multi-set) $I$; and (3) $\chi(\Frob_v)$ is of the form $\lambda_{v,i_1} \cdots \lambda_{v,i_r}$ where the indices $i_1, \ldots, i_r$ are distinct. The set $\cH$ is finite.

 Via the fixed isomorphism $\ol{\bQ}_{\ell} \cong \bC$, we can associate to each Hecke character of $F$ a de~Rham character $\Gamma_F \to \ol{\bQ}_{\ell}^{\times}$, and vice versa. The following lemma exposes the key property of the set $\cH$:

\begin{lemma}
Let $\sigma$ be a summand of $\rho_{\ell} \otimes \ol{\bQ}_{\ell}$. Then the Hecke character corresponding to $\det(\sigma)$ belongs to $\cH$.
\end{lemma}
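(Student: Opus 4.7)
The plan is to verify, for the $1$-dimensional $\ell$-adic character $\det(\sigma)$, each of the three conditions defining $\cH$, after first checking that $\det(\sigma)$ even corresponds to an algebraic Hecke character. Since $\sigma$ is a direct summand of $\rho_\ell \otimes \ol{\bQ}_\ell$, it inherits the relevant local behavior of $\rho_\ell$: away from $S$ and the primes above $\ell$, $\sigma$ is unramified (hence so is $\det(\sigma)$, which gives condition (1)), while at places above $\ell$, $\sigma$ is a summand of a crystalline representation and so is itself crystalline. Thus $\det(\sigma)$ is crystalline, in particular de Rham. I would then invoke the classical fact that a continuous de Rham character $\Gamma_F \to \ol{\bQ}_\ell^\times$ unramified outside a finite set of places is the $\ell$-adic realization of a unique algebraic Hecke character; this produces the Hecke character associated with $\det(\sigma)$.

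For condition (2), I would track Hodge--Tate weights. By hypothesis the Hodge--Tate weights of $\rho_\ell$ at any place above $\ell$ form the multi-set $I$. The weights of the summand $\sigma$ form a sub-multi-set of $I$ of size $\dim \sigma$, and the single Hodge--Tate weight of $\det(\sigma)$ at that place is the sum of this sub-multi-set, which is exactly a sum of distinct elements of the multi-set $I$ in the sense of the definition of $\cH$.

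For condition (3), I would compare Frobenius eigenvalues. For $v \in T$ (of residue characteristic different from $\ell$), $\rho_\ell$ is unramified at $v$ with Frobenius eigenvalues $\lambda_{v,1},\ldots,\lambda_{v,d}$, and the eigenvalues of $\sigma(\Frob_v)$ form a sub-multi-set of these, indexed by some distinct $i_1,\ldots,i_r \in \{1,\ldots,d\}$. Therefore $\det(\sigma)(\Frob_v) = \lambda_{v,i_1}\cdots\lambda_{v,i_r}$, as required.

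The only genuinely non-formal input is the classical correspondence between $1$-dimensional de Rham $\ell$-adic representations of $\Gamma_F$ and algebraic Hecke characters of $F$; the rest is routine bookkeeping on how ramification, crystallinity, Hodge--Tate weights, and Frobenius eigenvalues behave under passing to direct summands and taking determinants. I expect no serious obstacle.
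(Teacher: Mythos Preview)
Your proof is correct. The paper in fact states this lemma without proof, treating it as immediate from the way $\cH$ was set up; your verification of conditions (1)--(3), together with the appeal to the standard correspondence between one-dimensional de~Rham $\ell$-adic characters and algebraic Hecke characters, is exactly the intended unpacking.

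One minor point you implicitly flag by writing ``of residue characteristic different from $\ell$'': condition~(3) is required for every $v \in T$, and for the finitely many $v \in T$ with $v \mid \ell$ the operator $\sigma(\Frob_v)$ is not directly available (there $\rho_\ell$ is crystalline rather than unramified). This is harmless: one can either invoke crystalline Frobenius, or simply observe that only finitely many $\ell$ are affected and that the downstream argument (the proof of Proposition~\ref{prop:bd}) already explicitly discards finitely many $\ell$. The paper does not comment on this either.
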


The Galois group $\Gamma_{\bQ}$ acts on $\cH$ by restricting its action on algebraic Hecke characters: for any algebraic Hecke character $\psi$, the finite part $\psi_f$ can be defined over $\ol{\bQ}$, and for any $\sigma \in \Gamma_{\bQ}$, there is then a unique (still algebraic) Hecke character ${}^\sigma \psi$ whose finite part is isomorphic to $\psi_f \otimes_{\ol{\bQ}, \sigma} \ol{\bQ}$. Let $\cS$ be a multi-set whose elements belong to $\cH$ and that is stable under $\Gamma_{\bQ_{\ell}}$. There is then an associated representation $\tau_{\cS,\ell} \colon \Gamma_F \to \GL_n(\bQ_{\ell})$, with $n=\# \cS$, such that $\tau_{\cS,\ell} \otimes \ol{\bQ}_{\ell}$ is the sum of the one-dimensional representations given by elements of $\cS$. Let $T_{\cS,\ell}/\bQ_{\ell}$ be the Zariski closure of the image of $\tau_{\cS,\ell}$.

\begin{proposition} \label{prop:bd}
There exists a positive integer $M$ such that the image of $\tau_{\cS,\ell}$ in $T_{\cS,\ell}(\bQ_{\ell})$ has index at most $M$ in the maximal compact, for any choice of $\ell$ and $\Gamma_{\bQ_{\ell}}$-stable $\cS$.
\end{proposition}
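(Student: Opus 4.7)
The plan is to reduce to finitely many cases and then invoke the classical theory of abelian compatible systems. First, $T_{\cS,\ell}$ and the image $\tau_{\cS,\ell}(\Gamma_F)$ depend only on the underlying set of distinct characters appearing in $\cS$, since a repeated summand merely diagonally embeds a factor torus without enlarging the image. I may thus take $\cS$ to be a subset of $\cH$. As $\cH$ is finite, only finitely many such subsets arise, and it suffices to fix $\cS = \{\chi_1,\dots,\chi_n\} \subset \cH$ and prove a bound $M_{\cS}$ uniform in $\ell$; the desired $M$ is then $\max_{\cS} M_{\cS}$ over the finitely many subsets.

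With $\cS$ fixed, the family $\{\tau_{\cS,\ell}\}_{\ell}$ is a compatible system of abelian semi-simple representations of $\Gamma_F$, since each $\chi_i \in \cH$ is an algebraic Hecke character whose $\ell$-adic realizations (as $\ell$ varies) form a compatible system. The multiplicative relations among $\chi_{1,\ell},\dots,\chi_{n,\ell}$ are detected on Frobenius elements at unramified places, whose values are algebraic numbers independent of $\ell$, so $T_{\cS,\ell}$ is the base change to $\bQ_\ell$ of a single $\bQ$-torus $T_{\cS}$ (whose character lattice is $\bZ^n$ modulo the multiplicative relations among the $\chi_i$). The problem thus reduces to bounding, uniformly in $\ell$, the index of $\tau_{\cS,\ell}(\Gamma_F)$ in the maximal compact subgroup of $T_{\cS}(\bQ_\ell)$.

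The main obstacle is precisely this $\ell$-uniformity. For any single $\ell$, openness of the image in the maximal compact follows because $\tau_{\cS,\ell}$ is de Rham at $\ell$ with Hodge--Tate weights drawn from the fixed finite set of sums of distinct elements of $I$. For a uniform-in-$\ell$ bound on the index, I would use class field theory to describe $\tau_{\cS,\ell}$ explicitly: via the reciprocity map, the restriction of $\tau_{\cS,\ell}$ to the local units at $\ell$ is determined by the algebraic data (the infinity types) of the $\chi_i$, which are drawn from the finite data encoded in $\cH$. The index of the image in the maximal compact of $T_{\cS}(\bQ_\ell)$ is then controlled by the cokernel of $\cO_F^\times \to \prod_{v\mid\ell}\cO_{F_v}^\times \to T_{\cS}(\bZ_\ell)$; since the infinity types and the ramification set $S$ are fixed, this cokernel admits an $\ell$-independent bound via Dirichlet's unit theorem together with an elementary local computation. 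This is essentially the content of \cite[Proposition 5.2.2]{blggt}, but in the present setting the reduction above reduces the problem to this classical abelian input applied to the finite collection of compatible systems indexed by subsets $\cS \subset \cH$.
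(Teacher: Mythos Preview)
Your reduction to finitely many subsets $\cS \subset \cH$ is valid, but it does not dispose of the main difficulty, which you yourself identify: once $\cS$ is fixed, one still needs a bound on $[T_{\cS,\ell}(\bZ_\ell):\tau_{\cS,\ell}(\Gamma_F)]$ that is uniform in $\ell$. You handle this by invoking \cite[Proposition~5.2.2]{blggt}, but the explicit aim of this section is to give an independent proof of that very result, so relative to the paper's goals this is circular. More importantly, the sketch you offer for how such an argument would go is not correct: via class field theory and local algebraicity, the image of $\tau_{\cS,\ell}$ already contains the image of the full group of \emph{local} units $(\cO_F \otimes \bZ_\ell)^\times = T_F(\bZ_\ell)$ under the algebraic morphism $f_{\cS,\ell}\colon T_{F,\ell} \to T_{\cS,\ell}$, not merely the image of the global units $\cO_F^\times$. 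Dirichlet's unit theorem plays no role. The genuine content is to bound the cokernel of $f_{\cS,\ell}$ on $\bZ_\ell$-points, and this is what the paper actually does: the Hodge--Tate weights of the $\chi\in\cH$ are uniformly bounded, hence so is the torsion in $X^\bullet(T_{F,\ell})/f_{\cS,\ell}^*X^\bullet(T_{\cS,\ell})$; after discarding the finitely many $\ell$ below this bound, $f_{\cS,\ell}$ is a surjection of \'etale sheaves over $\Spec\bZ_\ell$, and the cokernel on $\bZ_\ell$-points is controlled by an \'etale $\rH^1$ that reduces to the same torsion bound. Your finiteness reduction would allow one to run this computation for each $\cS$ separately, but the computation itself is the substance, and you have not supplied it.

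A smaller issue: your assertion that $T_{\cS,\ell}$ descends to a single $\bQ$-torus $T_{\cS}$ requires $\cS$ to be $\Gamma_{\bQ}$-stable, not merely $\Gamma_{\bQ_\ell}$-stable; the hypothesis of the proposition only gives the latter. This is repairable (enlarge $\cS$ to its $\Gamma_{\bQ}$-orbit, still a subset of the finite set $\cH$), but it should be said.
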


\begin{proof}
Let $T_F$ denote the restriction of scalars torus $\mathrm{Res}_{F/\bQ}(\bG_m)$, and let $T_{F, \ell}$ denote $T_F \otimes_{\bQ} \bQ_{\ell}$. The representation $\tau_{\cS, \ell}$ is Hodge--Tate and abelian, hence locally algebraic (\cite[A7 Theorem 3]{serre:l-adic}): there exists an algebraic morphism of tori $f_{\cS, \ell} \colon T_{F, \ell} \to T_{\cS, \ell}$ such that (leaving the class field theory identification implicit) $f_{\cS, \ell}|_{U}= \tau_{\cS, \ell}|_{U}$ for some open subgroup $U$ of $(F \otimes_{\bQ} \bQ_{\ell})^{\times}$. We claim that $f_{\cS, \ell}$ is surjective. Since $T_{\cS, \ell}(\bQ_{\ell})$ has a pro-$\ell$ open subgroup, and $\tau_{\cS, \ell}(U)$ contains an open pro-$\ell$ subgroup of $\tau_{\cS, \ell}(\Gamma_F)$, $\tau_{\cS, \ell}(U)$ is open in $\tau_{\cS, \ell}(\Gamma_F)$. As $T_{\cS, \ell}$ is connected, $\tau_{\cS, \ell}(U)=f_{\cS, \ell}(U)$ is Zariski-dense in $T_{\cS, \ell}$, so $f_{\cS, \ell}$ is a surjection.

Next, we are allowed to discard a finite number of $\ell$, so we now assume that the Hecke characters in $\cH$ are unramified at all primes above $\ell$, and that $F/\bQ$ is unramified at $\ell$. We may therefore assume $\tau_{\cS, \ell}$ is crystalline, and that the torus $T_{F, \ell}$, and therefore its quotient $T_{\cS, \ell}$, are unramified. These tori canonically extend to tori over $\bZ_{\ell}$, as does the morphism $f_{\cS, \ell}$ (all of which we continue to denote by the same symbols). Moreover, $\tau_{\cS, \ell}$ and $f_{\cS, \ell}$ in fact agree on all of $(\cO_F \otimes_{\bZ} \bZ_{\ell})^\times$. To see this, we can reduce to the case of Galois characters, where it is well-known (\cite[3.9.7 Corollary]{cco}).

To find an upper bound on the index $[T_{\cS, \ell}(\bZ_{\ell}):\tau_{\cS, \ell}(\Gamma_F)]$, it therefore suffices to bound the cokernel of the induced map on $\bZ_{\ell}$-points, i.e. the index $[T_{\cS, \ell}(\bZ_{\ell}):f_{\cS,\ell}(T_{F, \ell}(\bZ_{\ell})]$. We begin by bounding the order of the torsion subgroup of $X^{\bullet}(T_{F, \ell})/f_{\cS, \ell}^*(X^{\bullet}(T_{\cS, \ell}))$, where we write $X^\bullet$ for the character group. Denote by $a$ the degree $[F: \bQ]$. Each of the Hecke characters $\psi \in \cS \subset \cH$ has a collection of Hodge--Tate numbers, and there is an absolute bound, independent of $\ell$ and $\cS$, for the greatest of these integers; let us call it $N$. We can then identify the image $f_{\cS, \ell}^* X^\bullet(T_{\cS, \ell})$ with the submodule of $\bZ^a$ spanned by a collection of integer vectors, all of whose coordinates have absolute value at most $N$. Next note that for any morphism of abelian groups $f \colon \bZ^b \to \bZ^a$ for which a set of generators of $\bZ^b$ map to vectors whose coordinates are bounded by $N$, the order of $\mathrm{coker}(f)_{\tor}$ is bounded by a function of $a$ and $N$. Namely, $|\mathrm{coker}(f)_{\tor}|$ is equal to the greatest common divisor of the $r \times r$ minors, where $r$ equals the rank of $f$ (in our application, $r=b$). In general, then, the cokernel can have torsion order no greater than the largest determinant of an $a \times a$ integer matrix with entries bounded in absolute value by $N$.

We thus have a uniform bound for the torsion subgroup of $X^{\bullet}(T_{F, \ell})/f_{\cS, \ell}^*(X^{\bullet}(T_{\cS, \ell}))$. If we discard the finitely many $\ell$ below this bound, we see that $f_{\cS, \ell}$ is not only a surjection of tori, but is in fact a surjection of \'{e}tale sheaves on $\Spec \bZ_{\ell}$ (we can check this over an unramified extension of $\bZ_{\ell}$ that splits the tori, where by the invariant factor decomposition of the character groups it reduces to the assertion that multiplication by $n$ on $\bG_m$ is a surjection of \'{e}tale sheaves in characteristic not dividing $n$). We can therefore bound the index $[T_{\cS, \ell}(\bZ_{\ell}): f_{\cS, \ell}(T_{F, \ell}(\bZ_{\ell}))]$ by\footnote{The following calculation comes from \cite[Lemma A.1.6]{blggt}, although that lemma is stated somewhat imprecisely: multiplication by 2 on $\bf{G}_m$ over $\bZ_2$ is a counter-example, unless ``surjection" is read as ``surjection of \'{e}tale sheaves."} 
\begin{align*}
H^1_{\acute{e}t}(\Spec \bZ_{\ell}, \ker (f_{\cS, \ell}))&= H^1_{\acute{e}t}(\Spec \mathbf{F}_{\ell}, \ker(f_{\cS, \ell})) \\ &=H^1 \left(\Gamma_{\mathbf{F}_{\ell}}, \Hom(X^\bullet(T_{F, \ell})/f_{\cS, \ell}^* X^\bullet(T_{\cS, \ell}), \ol{\mathbf{F}}_{\ell}^\times) \right) \\&= \Hom(X^\bullet(T_{F, \ell})/f_{\cS, \ell}^* X^\bullet(T_{\cS, \ell}), \ol{\mathbf{F}}_{\ell}^\times)_{(\mathrm{Fr}_{\ell}-1)}.
\end{align*}
(The last subscript denotes $\mathrm{Fr}_{\ell}-1$ coinvariants.) Choose an integer $m$ so that $\Gamma_{\bF_{\ell^m}}$ acts trivially on $X^\bullet(T_{F, \ell})$. Then there is a surjection
\[
\Hom(X^\bullet(T_{F, \ell})/f_{\cS, \ell}^* X^\bullet(T_{\cS, \ell}), \ol{\mathbf{F}}_{\ell}^\times)_{(\mathrm{Fr}_{\ell^m}-1)} \to \Hom(X^\bullet(T_{F, \ell})/f_{\cS, \ell}^* X^\bullet(T_{\cS, \ell}), \ol{\mathbf{F}}_{\ell}^\times)_{(\mathrm{Fr}_{\ell}-1)},
\]
and the source of this map has order bounded by the order of the torsion subgroup of $X^\bullet(T_{F, \ell})/f_{\cS, \ell}^* X^\bullet(T_{\cS, \ell})$ (the torsion-free quotient has vanishing cohomology, by Hilbert 90), for which we have already obtained an independent of $\ell$ bound. The proposition follows.
\end{proof}

Fix a prime $\ell \in \Sigma$ for the moment. Decompose $\rho_{\ell} \otimes \ol{\bQ}_{\ell}$ as $\sigma_1 \oplus \cdots \oplus \sigma_r$ with $\sigma_i$ irreducible of dimension $d_i$. Over $\ol{\bQ}_{\ell}$, we can regard $G_{\ell}$ as a subgroup of $\GL_{d_1} \times \cdots \times \GL_{d_r} \subset \GL_d$. Consider the diagram 
\begin{displaymath}
\xymatrix{
G_{\ell} \ar[r] \ar[d] & \GL_{d_1} \times \cdots \times \GL_{d_r} \ar[d] \\
G^{\tor}_{\ell} \ar[r]^{\delta} & \bG_m^r }
\end{displaymath}
where the right vertical maps are determinants, and the map $\delta$ is induced by the universal property of $G^{\tor}$ (every map from $G$ to a torus factors through $G^{\tor}$). The central torus $Z$ of $G_{\ell}$ maps to the central $\bG_m$ in each $\GL_{d_i}$. The map $\bG_m^r \subset \GL_{d_1} \times \cdots \times \GL_{d_r} \stackrel{\det}{\to} \bG_m^r$ has kernel of size $d_1 \cdots d_r$. It follows that the kernel of the determinant map $Z \to \bG_m^r$ is also bounded by this, and so is the kernel of $\delta$. The composite $\Gamma_F \to G^{\tor}_{\ell}(\ol{\bQ}_{\ell}) \to \bG_m^r(\ol{\bQ}_{\ell})$ is given explicitly by $(\det(\sigma_1), \ldots, \det(\sigma_r))$. Let $\cS=\{\det(\sigma_i)\} \subset \cH$. This set is clearly $\Gamma_{\bQ_{\ell}}$-stable. Theorem~\ref{thm:torimg} now follows from Proposition~\ref{prop:bd} and the following lemma.

\begin{lemma}
There is an isogeny of tori $G^{\tor}_{\ell} \to T_{\cS,\ell}$ (defined over $\bQ_{\ell}$), the degree of which is bounded independent of $\ell$, such that the diagram
\begin{displaymath}
\xymatrix{
\Gamma_F \ar[r] \ar[rd]_{\tau_{\cS,\ell}} & G^{\tor}_{\ell}(\bQ_{\ell}) \ar[d] \\ & T_{\cS,\ell}(\bQ_{\ell}) }
\end{displaymath}
commutes.
\end{lemma}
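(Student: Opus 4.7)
The plan is to extract the required $\bQ_\ell$-morphism directly from the $\ol{\bQ}_\ell$-map $\delta \colon G^{\tor}_\ell \otimes \ol{\bQ}_\ell \to \bG_m^r$ built in the paragraph preceding the lemma, so most of the work has already been done.

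First I would observe that, over $\ol{\bQ}_\ell$, the map $\delta$ factors through a surjection $\delta' \colon G^{\tor}_\ell \otimes \ol{\bQ}_\ell \twoheadrightarrow T_{\cS,\ell} \otimes \ol{\bQ}_\ell$, where $T_{\cS,\ell} \otimes \ol{\bQ}_\ell$ is identified with the subtorus of $\bG_m^r$ arising as the Zariski closure of $\tau_{\cS,\ell}$ after diagonalization. Since $G_\ell$ is connected and $\rho_\ell(\Gamma_F)$ is Zariski-dense in $G_\ell$, the image of $\Gamma_F$ is Zariski-dense in $G^{\tor}_\ell$. Composing with $\delta$ gives the tuple $(\det(\sigma_1),\ldots,\det(\sigma_r))$, which is $\tau_{\cS,\ell} \otimes \ol{\bQ}_\ell$ under the same diagonalization; its Zariski closure is $T_{\cS,\ell} \otimes \ol{\bQ}_\ell$, so $\delta(G^{\tor}_\ell \otimes \ol{\bQ}_\ell) = T_{\cS,\ell} \otimes \ol{\bQ}_\ell$ and the factorization $\delta'$ exists.

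Second I would descend $\delta'$ to $\bQ_\ell$ via character groups. Dually, $\delta'$ corresponds to $(\delta')^* \colon X^\bullet(T_{\cS,\ell} \otimes \ol{\bQ}_\ell) \to X^\bullet(G^{\tor}_\ell \otimes \ol{\bQ}_\ell)$ sending the character of $T_{\cS,\ell}$ represented by $\chi = \det(\sigma_i) \in \cS$ to the same $\chi$ viewed as a character of $G^{\tor}_\ell$. Both character groups are $\Gamma_{\bQ_\ell}$-modules from their $\bQ_\ell$-structures, and in each case the Galois action on these particular characters is induced by the permutation action of $\Gamma_{\bQ_\ell}$ on the summands $\sigma_i$ of $\rho_\ell \otimes \ol{\bQ}_\ell$ (which exists because $\rho_\ell$ is defined over $\bQ_\ell$, and is exactly the permutation noted to make $\cS$ a $\Gamma_{\bQ_\ell}$-stable multi-set). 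Hence $(\delta')^*$ is $\Gamma_{\bQ_\ell}$-equivariant, and Galois descent yields a $\bQ_\ell$-morphism $G^{\tor}_\ell \to T_{\cS,\ell}$.

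Finally, I would verify the remaining claims. The kernel of $\delta'$ equals $\ker(\delta)$ (as $T_{\cS,\ell} \hookrightarrow \bG_m^r$ is a closed immersion over $\ol{\bQ}_\ell$), and the preceding discussion bounds $\ker(\delta)$ by $d_1 \cdots d_r$, a quantity depending only on $d$; thus $\delta'$ is a surjective isogeny of degree bounded independently of $\ell$. Commutativity of the triangle is automatic, because both $\tau_{\cS,\ell}$ and the composite $\Gamma_F \to G^{\tor}_\ell(\bQ_\ell) \to T_{\cS,\ell}(\bQ_\ell)$ agree after base change to $\ol{\bQ}_\ell$, where they both equal $(\det(\sigma_1),\ldots,\det(\sigma_r))$. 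The only step that really needs care is the middle one (descent to $\bQ_\ell$); it hinges on the $\Gamma_{\bQ_\ell}$-stability of $\cS$ that was built into the definition.
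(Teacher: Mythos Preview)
Your proof is correct and follows essentially the same route as the paper: construct the map over $\ol{\bQ}_\ell$ from $\delta$, observe its image is $T_{\cS,\ell}\otimes\ol{\bQ}_\ell$ by Zariski density, descend to $\bQ_\ell$, and cite the earlier bound $\#\ker\delta\le d_1\cdots d_r$. The only cosmetic difference is in the descent step, where the paper invokes the uniqueness part of the universal property of $G^{\tor}_\ell$ (any $\Gamma_{\bQ_\ell}$-conjugate of the map fits the same commuting triangle, since $\Gamma_F$ lands in $\bQ_\ell$-points, so equals it) rather than checking equivariance of $(\delta')^*$ on character groups directly.
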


\begin{proof}
The composite of the above homomorphism $\Gamma_F \to \bG_m^r(\ol{\bQ}_{\ell})$ with the direct sum map $\bG_m^r \to \GL_r$ is $\tau_{\cS, \ell} \otimes \ol{\bQ}_{\ell}$. By the universal property of $G^{\tor}_{\ell}$, there is a unique homomorphism $\pi_{\cS, \ell} \colon G^{\tor}_{\ell} \otimes \ol{\bQ}_{\ell} \to T_{\cS, \ell} \otimes \ol{\bQ}_{\ell}$ such that the diagram
\[
 \xymatrix{
\Gamma_F \ar[r] \ar[rd]_{\tau_{\cS,\ell}} & G^{\tor}_{\ell}(\ol{\bQ}_{\ell}) \ar[d]^{\pi_{\cS, \ell}} \\ & T_{\cS, \ell}(\ol{\bQ}_{\ell})
}
\]
commutes. Conjugating $\pi_{\cS, \ell}$ by any element of $\Gamma_{\bQ_{\ell}}$ yields another such diagram, since the image of $\Gamma_F$ in both $G_{\ell}^{\tor}$ and $T_{\cS, \ell}$ lies in the $\bQ_{\ell}$-points. By uniqueness, we conclude that $\pi_{\cS, \ell}$ is $\Gamma_{\bQ_{\ell}}$-invariant, hence descends to a morphism $\pi_{\cS, \ell} \colon G^{\tor}_{\ell} \to T_{\cS, \ell}$. It is a surjection since $T_{\cS, \ell}$ is by definition the Zariski-closure of the image of $\tau_{\cS, \ell}$; and its kernel is finite, bounded independently of $\ell$ by $d_1 \cdots d_r$.
\end{proof}
\section{Irreducibility of residual representations}\label{grouptheory}

\subsection{Extending automorphisms of group schemes}

\begin{lemma}
Let $\cO$ be a complete DVR with fraction field $K$ and residue field $k$, let $X/\cO$ be a smooth affine scheme, and let $f \colon X_K \to X_K$ be a morphism of $K$-schemes such that $f$ carries $X(\cO_L)$ into itself for every finite unramified extension $L/K$. Then $f$ extends uniquely to a morphism of $\cO$-schemes $X \to X$.
\end{lemma}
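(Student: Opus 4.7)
The plan is to reduce the claim to algebra. Write $X = \Spec A$, so $A$ is a smooth (hence flat) $\cO$-algebra, and $f$ corresponds to a $K$-algebra homomorphism $f^\sharp \colon A_K \to A_K$. Fix a uniformizer $\pi$ of $\cO$. Flatness gives $A \hookrightarrow A_K$, so any $\cO$-morphism $X \to X$ extending $f$ corresponds to an $\cO$-algebra map $A \to A$ whose base change to $K$ is $f^\sharp$, and such a map is necessarily the restriction of $f^\sharp$. Hence uniqueness is automatic, and the entire content of the statement is that $f^\sharp(A) \subseteq A$.

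Fix $a_0 \in A$ and set $a := f^\sharp(a_0) \in A_K$. Suppose for contradiction that $a \notin A$. Since $A_K = A[1/\pi]$ and $A$ is $\pi$-torsion free, we may write $a = b/\pi^k$ with $b \in A$ and $k \ge 1$ chosen minimal; then $b \notin \pi A$, so its reduction $\bar b \in A/\pi A$ is nonzero. The non-vanishing locus $D(\bar b) \subseteq X_k$ is a nonempty open subscheme of the smooth $k$-scheme $X_k$, and therefore contains a closed point $y$ whose residue field $k' := k(y)$ is finite and separable over $k$---this is a standard property of smooth $k$-schemes of finite type. Let $L/K$ be the unique finite unramified extension with residue field $k'$. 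Using the formal smoothness of $A$ over $\cO$ and the completeness of $\cO_L$, the $k'$-point $\bar\phi \colon A/\pi A \to k'$ corresponding to $y$ lifts successively through the tower $\{\cO_L/\pi^n\}$ and, in the limit, produces an $\cO$-algebra homomorphism $\phi \colon A \to \cO_L$ whose mod-$\pi$ reduction is $\bar\phi$.

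This yields the contradiction: $\phi(b)$ reduces to $\bar b(y) \neq 0$, so $\phi(b) \in \cO_L^\times$, whence $\phi_K(a) = \phi(b)/\pi^k$ has negative $\pi$-adic valuation and is not in $\cO_L$; but the hypothesis applied to $\phi \in X(\cO_L)$ forces $\phi_K(f^\sharp(a_0)) = \phi_K(a) \in \cO_L$. The main obstacle is really just bookkeeping: one must invoke two standard inputs---that a smooth $k$-scheme of finite type has closed points with separable residue field, and that formal smoothness together with the completeness of $\cO_L$ lifts $k'$-points to $\cO_L$-points---at the correct level of generality. Both are routine for the smooth affine $\cO$-schemes of finite presentation intended here.
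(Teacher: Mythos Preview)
Your argument is correct and follows essentially the same strategy as the paper: reduce to showing $f^\sharp(A)\subseteq A$, assume some $a=f^\sharp(a_0)$ has a denominator $\pi^k$, lift a suitable point of the special fiber to an $\cO_L$-point via smoothness and completeness, and obtain a contradiction from the hypothesis. The only cosmetic difference is that the paper evaluates $\pi^k a$ at \emph{every} closed point of the special fiber to conclude it lies in $\pi A$, whereas you more directly pick a single closed point in $D(\bar b)$ with separable residue field; both routes amount to the same idea.
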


\begin{proof}
Write $X=\Spec(A)$, with $A$ a flat $\cO$-algebra of finite type, so that $f$ corresponds to a map of $K$-algebras $f^* \colon A_K \to A_K$. We must show that $f^*$ carries $A$ into itself. Let $\varphi$ be an element of $A$, and let $\pi$ be a uniformizer of $\cO$. Let $n \ge 0$ be minimal so that $\pi^n f^*(\varphi) \in A$. We claim $n=0$, which will prove the lemma. Suppose not. Let $x$ be an $\ell$-point of $X$ for some finite extension $\ell/k$, and let $L$ be the unramified extension of $K$ corresponding to $\ell$. Since $X$ is smooth and $\cO_L$ is complete, $x$ lifts to a $\cO_L$-point $\wt{x}$ of $X$. We have $f^*(\varphi)(\wt{x})=\varphi(f(\wt{x})) \in \cO_L$ since $f(\wt{x}) \in X(\cO_L)$. Since $n>0$, it follows that $(\pi^n f^*(\varphi))(x)=0$. Thus $\pi^n f^*(\varphi)$ vanishes on all $\ol{k}$-points of the special fiber, and thus vanishes on the special fiber since the special fiber is reduced. That is, $\pi^n f^*(\varphi)=0$ in $A/\pi A$, and so $\pi^n f^*(\varphi) \in \pi A$. But this implies $\pi^{n-1} f^*(\varphi) \in A$ since $A$ is flat over $\cO$, a contradiction.
\end{proof}

\begin{lemma} \label{extaut}
Let $K/\bQ_{\ell}$ be a finite extension, let $\cO=\cO_K$, let $\cG/\cO$ be a simply connected semi-simple group, and let $f \colon \cG_K \to \cG_K$ be an automorphism of the generic fiber such that $f$ carries $\cG(\cO)$ into itself. Then $f$ extends uniquely to an automorphism $\cG \to \cG$ of $\cO$-groups.
\end{lemma}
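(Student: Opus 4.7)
The plan is to invoke the preceding lemma with $X=\cG$, for which I need to upgrade the hypothesis that $f$ preserves $\cG(\cO)$ to the statement that $f$ preserves $\cG(\cO_L)$ for every finite unramified extension $L/K$. Once this stronger statement holds, applying the preceding lemma to both $f$ and $f^{-1}$, together with flatness of $\cG/\cO$, will finish the proof.

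First I would sharpen the hypothesis to $f(\cG(\cO))=\cG(\cO)$. Since $f$ is a $K$-automorphism of the $K$-group $\cG_K$, it induces a topological group automorphism of $\cG(K)$ and so sends maximal compact subgroups to maximal compact subgroups. The subgroup $\cG(\cO)$ is a hyperspecial, in particular maximal, compact subgroup because $\cG$ is a reductive group scheme over $\cO$. Thus $f(\cG(\cO))$ is a maximal compact subgroup contained in $\cG(\cO)$, and must equal it; in particular $f^{-1}$ also preserves $\cG(\cO)$.

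The key step is the extension to unramified $L/K$. Base change $f$ to $L$, yielding an $L$-automorphism $f_L$ of $\cG_L$ that commutes with the action of $\mathrm{Gal}(L/K)$. By functoriality of the Bruhat--Tits building for simply connected semi-simple groups, $f_L$ induces a $\mathrm{Gal}(L/K)$-equivariant automorphism of $\cB(\cG_L,L)$, hence stabilizes the Galois-fixed subcomplex, which is $\cB(\cG_K,K)$. Let $x_0\in\cB(\cG_K,K)$ be the hyperspecial vertex whose stabilizer in $\cG(K)$ is $\cG(\cO)$. The first step gives $f_L(x_0)=x_0$. Under the natural inclusion $\cB(\cG_K,K)\hookrightarrow\cB(\cG_L,L)$, the vertex $x_0$ is precisely the hyperspecial vertex attached to the reductive model $\cG_{\cO_L}$, so its stabilizer in $\cG(L)$ equals $\cG(\cO_L)$. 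Therefore $f(\cG(\cO_L))=\cG(\cO_L)$.

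Finally, applying the preceding lemma to $f$ and to $f^{-1}$ produces $\cO$-morphisms $\tilde{f},\tilde{g}\colon\cG\to\cG$ whose compositions $\tilde{g}\circ\tilde{f}$ and $\tilde{f}\circ\tilde{g}$ agree with $\id_{\cG_K}$ on the generic fiber. By the uniqueness part of the preceding lemma, or directly by flatness of $\cG/\cO$, these compositions equal $\id_\cG$, so $\tilde{f}$ is an $\cO$-scheme automorphism. That $\tilde{f}$ respects the group law is automatic: the two $\cO$-morphisms $\tilde{f}\circ\mu_\cG$ and $\mu_\cG\circ(\tilde{f}\times\tilde{f})$ from $\cG\times\cG$ to $\cG$ agree on the generic fiber and hence everywhere by flatness.

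The genuine work is the middle paragraph, and the main obstacle there is the need to cite three facts from Bruhat--Tits theory: functoriality of $\cB(\cG_L,L)$ under $L$-automorphisms of $\cG_L$; identification of $\cB(\cG_K,K)$ with the Galois-fixed subcomplex of $\cB(\cG_L,L)$ for unramified $L/K$; and the fact that a reductive model $\cG/\cO$ determines a hyperspecial vertex of $\cB(\cG_K,K)$ which, under the embedding into $\cB(\cG_L,L)$, is the hyperspecial vertex attached to $\cG_{\cO_L}$. All three are standard for simply connected semi-simple reductive models, but together they carry the only geometric content of the argument.
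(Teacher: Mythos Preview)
Your proof is correct and follows essentially the same route as the paper: reduce to the preceding lemma by showing $f$ preserves $\cG(\cO_L)$ for every finite unramified extension $L/K$, then conclude by uniqueness. The only difference is packaging: the paper obtains the preservation of $\cG(\cO_L)$ (in fact for all tamely ramified $L$) by citing \cite[Lemma~6.2]{bigness}, whereas you supply the Bruhat--Tits argument directly; the content of that cited lemma is precisely the building argument you sketch.
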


\begin{proof}
By \cite[Lemma~6.2]{bigness}, $f$ carries $\cG(\cO_L)$ into itself for any tamely ramified finite extension $L/K$. Therefore, by the previous lemma, $f$ uniquely extends to a map $\cG \to \cG$ of $\cO$-schemes, which is necessarily a group automorphism by uniqueness.
\end{proof}

%
%

\begin{remark}
In fact, Lemma~\ref{extaut} is true when $\cG$ is connected reductive. One can deduce the general result from the simply connected case fairly easily.
\end{remark} 

\subsection{A result on hyperspecial groups}
\label{ss:hyper}

Let $K/\bQ_{\ell}$ be a finite extension and let $\cO=\cO_K$ be the ring of integers. Let $G/K$ be a reductive group with $G^{\circ}$ unramified and let $\Gamma \subset G(K)$ be a Zariski dense profinite group. We use the following notation:
\begin{displaymath}
\xymatrix{
G^{\sconn} \ar[r]^{\pi} \ar[rd]_{\sigma} & G^{\circ} \ar[d]^{\tau} \\ & G^{\ad} }
\end{displaymath}
(Note that $\pi$ factors as $G^{\sconn} \to G^{\der} \subset G^{\circ}$.) We let $\Gamma^{\circ} = \Gamma \cap G^{\circ}(K)$.

\begin{proposition} \label{prop:hyper}
Assume $\sigma^{-1}(\tau(\Gamma^{\circ}))$ is a hyperspecial subgroup of $G^{\sconn}(K)$ and that $\Gamma^{\tor}$ is open in $G^{\tor}(K)$. Also assume that the residue characteristic $\ell$ is sufficiently large compared to $\dim(G)$. Then there exists a reductive group $\cG/\cO$ with generic fiber $G$ such that $\Gamma$ is an open subgroup of $\cG(\cO)$. Moreover, the index of $\Gamma$ in $\cG(\cO)$ can be bounded in terms of the dimension of $G$ and the index of $\Gamma^{\tor}$ in the maximal compact of $G^{\tor}(K)$.
\end{proposition}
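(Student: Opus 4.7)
The plan is to integrate $G$ in three stages: build a smooth reductive $\cO$-model $\cG^\circ$ of $G^\circ$; show that $\Gamma^\circ := \Gamma \cap G^\circ(K)$ is open in $\cG^\circ(\cO)$ of bounded index; and glue across components of $G$ via the automorphism-extension lemma. For the first stage, the hyperspecial hypothesis produces a smooth semi-simple $\cO$-group $\cG^{\sconn}$ with $\cG^{\sconn}(\cO) = \sigma^{-1}(\tau(\Gamma^\circ))$. Unramifiedness of $G^\circ$ gives canonical smooth $\cO$-models $\cZ$ and $\cG^{\tor}$ for the connected center $Z = Z(G^\circ)^\circ$ and the torus quotient $G^{\tor}$ (the torus $Z$ is unramified because it is isogenous to the unramified $G^{\tor}$). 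The multiplication map $G^{\sconn} \times Z \to G^\circ$ is a central isogeny with finite kernel $\mu'$ whose order is bounded purely in terms of $\dim G$, so for $\ell$ large enough that $\mu'$ is \'etale over $\cO$ I define $\cG^\circ := (\cG^{\sconn} \times \cZ)/\mu'$, a smooth reductive $\cO$-group scheme with generic fiber $G^\circ$.

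Next I would verify $\Gamma^\circ \subseteq \cG^\circ(\cO)$ and bound the index. Boundedness of $\Gamma^\circ$ together with the hyperspecial hypothesis forces $\tau(\Gamma^\circ)$ to stabilize the vertex $\bar x$ of $\cB(G^\circ)$ attached to $\cG^\circ$, so $\Gamma^\circ \subseteq \cG^\circ(\cO)$. For the index I would use the exact sequence
\[
1 \to \cG^{\der}(\cO) \to \cG^\circ(\cO) \to \cG^{\tor}(\cO) \to 1,
\]
which is right-exact by Lang's theorem applied to the semi-simple $\cG^{\der}$ on the residue fiber. The projection to $\cG^{\tor}(\cO)$ sends $\Gamma^\circ$ onto $\Gamma^{\tor}$, which is of bounded index by hypothesis. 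For the $\cG^{\der}$-direction, $\pi(\cG^{\sconn}(\cO))$ has finite cokernel in $\cG^{\der}(\cO)$ controlled by $|H^1_{\text{fppf}}(\cO, \ker \pi)|$, and the defining relation $\cG^{\sconn}(\cO) = \sigma^{-1}(\tau(\Gamma^\circ))$ forces $\pi(\cG^{\sconn}(\cO)) \subseteq \Gamma^\circ \cdot Z(G^\circ)(K)$ whose central ambiguity is absorbed, up to bounded error measured by $H^1(K, Z(G^{\sconn}))$, by the torus hypothesis. Every one of the finite cohomological error terms is bounded in terms of $\dim G$ once $\ell$ is sufficiently large, giving a uniform bound on $[\cG^\circ(\cO) : \Gamma^\circ]$ of the required shape.

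Finally, for $\gamma \in \Gamma$, conjugation by $\gamma$ is a $K$-automorphism of $G^\circ$ that preserves $\Gamma^\circ$ (by normality of $\Gamma^\circ$ in $\Gamma$), hence preserves $\cG^{\sconn}(\cO) = \sigma^{-1}(\tau(\Gamma^\circ))$ and the vertex $\bar x$, and therefore preserves $\cG^\circ(\cO)$. By Lemma~\ref{extaut} together with its reductive extension (the remark after the lemma), this conjugation extends uniquely to an $\cO$-automorphism of $\cG^\circ$, and these extensions depend only on the class of $\gamma$ in $\Gamma/\Gamma^\circ$. I would then assemble $\cG/\cO$ as the smooth affine $\cO$-group scheme obtained by gluing the corresponding $\cG^\circ$-torsors indexed by the image of $\Gamma$ in $\pi_0(G)(K)$; the generic fiber is $G$, and $\Gamma$ sits inside $\cG(\cO)$ as an open subgroup of index $[\cG^\circ(\cO):\Gamma^\circ]$. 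The main obstacle will be the index bound in the middle stage: isolating the $Z(G^\circ)$-ambiguity in the preimage $\sigma^{-1}(\tau(\Gamma^\circ))$ and showing that it is precisely what the torus hypothesis controls, uniformly in $\ell$, requires careful bookkeeping of Galois and flat cohomology of central subgroups of $G^{\sconn}$, and this is ultimately why $\ell$ must be assumed large compared to $\dim G$.
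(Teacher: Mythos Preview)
Your overall architecture matches the paper's: build $\cG^\circ$ as a quotient of $\cG^{\sconn}$ times the integral model of the central torus, extend conjugation by elements of $\Gamma$ to $\cO$-automorphisms of $\cG^\circ$, glue across components, and bound the index. The construction and gluing are essentially as in the paper (the paper extends the automorphism separately on $\cG^{\sconn}$ and on the torus $\cT$ and then descends to $\cG^\circ$, rather than invoking the reductive case of Lemma~\ref{extaut} directly, but this is a minor packaging difference).

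The substantive gap is in your index bound. You correctly reduce, via your short exact sequence, to bounding $[\cG^{\der}(\cO) : \Gamma^\circ \cap \cG^{\der}(\cO)]$, and you correctly observe that $\pi(\cG^{\sconn}(\cO))$ has bounded index in $\cG^{\der}(\cO)$ and that $\pi(\cG^{\sconn}(\cO)) \subseteq \Gamma^\circ \cdot Z(G^\circ)(K)$. But from there you need a bounded-index subgroup of $\pi(\cG^{\sconn}(\cO))$ to lie in $\Gamma^\circ$ itself, not merely in $\Gamma^\circ \cdot Z$. Your sentence ``the central ambiguity is absorbed, up to bounded error measured by $\rH^1(K, Z(G^{\sconn}))$, by the torus hypothesis'' does not do this: the torus hypothesis controls the \emph{image} of $\Gamma^\circ$ in $G^{\tor}$, not the intersection $\Gamma^\circ \cap Z(G^\circ)$, and there is no evident map in your setup whose cokernel is the $\rH^1$ you name. (One clean way to close the gap along your lines: for $\ell$ large the hyperspecial group $\cG^{\sconn}(\cO)$ is perfect, so $\sigma(\cG^{\sconn}(\cO)) = \tau(\Gamma^\circ) = [\tau(\Gamma^\circ),\tau(\Gamma^\circ)] = \tau([\Gamma^\circ,\Gamma^\circ]) \subseteq \tau(\Gamma^\circ \cap G^{\der})$; then compare $\Gamma^\circ \cap \cG^{\der}(\cO)$ with $\cG^{\der}(\cO)$ through the isogeny $\tau\vert_{G^{\der}}$, whose kernel and $\rH^1$ over $\cO$ are bounded.) The paper sidesteps the issue entirely by using the map $\phi \colon \cG^\circ(\cO) \to \cG^{\ad}(\cO) \times \cG^{\tor}(\cO)$ instead of your exact sequence: $\phi$ has bounded finite kernel, and on the $\cG^{\ad}$-factor one has $\tau(\Gamma^\circ) = \sigma(\cG^{\sconn}(\cO))$ \emph{on the nose}, with index in $\cG^{\ad}(\cO)$ bounded by $\lvert\rH^1_{\rm et}(\Spec\cO, Z(\cG^{\sconn}))\rvert$; no central ambiguity arises.

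A smaller point: your building argument for $\Gamma^\circ \subseteq \cG^\circ(\cO)$ is plausible but incomplete as stated, since you have not identified the $\cO$-points of the particular quotient $\cG^\circ$ you constructed with the stabilizer of a vertex. The paper handles this more cheaply, and in the opposite order: it first shows (via the automorphism extension) that $\Gamma$ normalizes $\cG^\circ(\cO)$, and then observes that a compact subgroup of $G^\circ(K)$ normalizing the maximal compact $\cG^\circ(\cO)$ must lie inside it.
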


\begin{proof}
Let $\cG^{\sconn}/\cO$ be the given semi-simple group with generic fiber $G^{\sconn}$ such that $\cG^{\sconn}(\cO)=\sigma^{-1}(\tau(\Gamma^{\circ}))$. Let $\cG^{\circ}$ be the unique extension of $G^{\circ}$ to $\cO$ for which the map $G^{\sconn} \to G^{\circ}$ extends. One can construct this extension as follows. Let $T$ be the central torus in $G^{\circ}$, and let $\cT/\cO$ be the unique torus with generic fiber $T$ (note that $T$ is unramified by our assumption that $G^{\circ}$ is unramified). Let $\cZ/\cO$ be the center of $\cG^{\sconn}$, which is finite \'etale (due to our assumption on $\ell$), and let $Z=\cZ_K$. Let $H$ be the kernel of the map $T \times G^{\sconn} \to G^{\circ}$, which is contained in $T \times Z$. The order $n$ of $H$ can be bounded in terms of $\dim(G)$, and is therefore small compared to $\ell$. Let $\cH$ be the closure of $H$ in $\cT \times \cZ$. Since $H$ is contained in the finite \'etale group scheme $\cT[n] \times \cZ$ it follows that $\cH$ is finite \'etale. The group $\cG^{\circ}$ is then the quotient of $\cT \times \cG^{\sconn}$ by $\cH$, which exists by general theory. We do not prove uniqueness of $\cG^{\circ}$, as it is not needed in the proof.

Let $\gamma$ be an element of $\Gamma$. Then conjugation by $\gamma$ induces an automorphism of $G^{\circ}$ which descends to $G^{\ad}$ and then lifts (uniquely) to $G^{\sconn}$. Denote this automorphism of $G^{\sconn}$ by $f'_{\gamma}$. Let $g \in \cG^{\sconn}(\cO)$. Then $\sigma(g)=\tau(\alpha)$ for some $\alpha \in \Gamma^{\circ}$. We have
\begin{displaymath}
\sigma(f'_{\gamma}(g))=\gamma \sigma(g) \gamma^{-1}=\gamma \tau(\alpha) \gamma^{-1}=\tau(\beta)
\end{displaymath}
where $\beta=\gamma \alpha \gamma^{-1}$. Since $\beta \in \Gamma^{\circ}$, it follows that $f'_{\gamma}(g) \in \sigma^{-1}(\tau(\Gamma^{\circ}))=\cG^{\sconn}(\cO)$. Thus $f'_{\gamma}$ maps $\cG^{\sconn}(\cO)$ to itself, and therefore, by Lemma~\ref{extaut}, extends uniquely to an automorphism of $\cG^{\sconn}$, still denoted $f'_{\gamma}$.

It is clear that conjugation by $\gamma$ carries $T$ into itself. It therefore extends uniquely to an automorphism $f''_{\gamma}$ of $\cT$, as $\cT$ is the N\'eron model of $T$.

Now consider the quotient map $\phi \colon \cT \times \cG^{\sconn} \to \cG^{\circ}$. Let $t \in \cT(\ol{K})$ and $h \in \cG^{\sconn}(\ol{K})$. Then
\begin{displaymath}
\phi(f''_{\gamma}(t), f'_{\gamma}(h))=f''_{\gamma}(t) \pi(f'_{\gamma}(h))=(\gamma t \gamma^{-1}) (\gamma \pi(h) \gamma^{-1}) = \gamma \phi(t, h) \gamma^{-1}.
\end{displaymath}
It follows that $(f''_{\gamma}, f'_{\gamma})$ maps $\cH_{\ol{K}}$, and therefore $\cH$, to itself, and thus descends to an automorphism $f_{\gamma}$ of $\cG^{\circ}$. Moreover, it follows that $f_{\gamma}$ induces conjugation by $\gamma$ on the generic fiber of $\cG^{\circ}$.

We thus see that $\Gamma$ normalizes $\cG^{\circ}(\cO) \subset G(K)$. In particular, $\Gamma^{\circ}$ is a compact subgroup of $G^{\circ}(K)$ normalizing $\cG^{\circ}(\cO)$, and must therefore be contained in $\cG^{\circ}(\cO)$, as $\cG^{\circ}(\cO)$ is maximal compact.

We now define $\cG$ to be the pushout of $\Gamma^{\circ} \to \Gamma$ along the map $\Gamma^{\circ} \to \cG^{\circ}$. Explicitly, write $\Gamma = \coprod_{i=1}^r \gamma_i \Gamma^{\circ}$ with $\gamma_i \in \Gamma$. Then $\cG = \coprod_{i=1}^r \gamma_i \cG^{\circ}$. Suppose $\gamma_i \gamma_j = \gamma_k \gamma'$ with $\gamma' \in \Gamma^{\circ}$. Then we define a map
\begin{displaymath}
\gamma_i \cG^{\circ} \times \gamma_j \cG^{\circ} \to \gamma_k \cG^{\circ}
\end{displaymath}
by mapping $(\gamma_i g, \gamma_j h)$ to $\gamma_k \gamma' f_{\gamma_j^{-1}}(g) h$. Note that $(g, h) \mapsto \gamma' f_{\gamma_j^{-1}}(g) h$ is a well-defined map of $\cO$-schemes $\cG^{\circ} \times \cG^{\circ} \to \cG^{\circ}$. Combining these maps for all $i$ and $j$ gives the multiplication on $\cG$.

It is clear that $\cG$ is a reductive group over $\cO$ with generic fiber $G$, and that $\Gamma$ is contained in $\cG(\cO)$. We now show that $\Gamma$ is open. It suffices to show $\Gamma^{\circ}$ is open. Since $\sigma^{-1}(\tau(\Gamma^{\circ}))$ is open in $G^{\sconn}(K)$, it follows that $\tau(\Gamma^{\circ})$ is open. The image of $\Gamma^{\circ}$ in $\tau(\Gamma^{\circ}) \times \Gamma^{\tor}$ is open, and thus the image of $\Gamma^{\circ}$ under the map $G^{\circ}(K) \to G^{\ad}(K) \times G^{\tor}(K)$ is open. It follows that $\Gamma^{\circ}$ is open in $G^{\circ}(K)$.

Finally, we bound the index of $\Gamma$ in $\cG(\cO)$. Since $\Gamma$ meets every connected component, it suffices to bound the index of $\Gamma^{\circ}$ in $\cG^{\circ}(\cO)$. The kernel of the map $\phi \colon \cG^{\circ}(\cO) \to \cG^{\ad}(\cO) \times \cG^{\tor}(\cO)$ is finite, with order bounded in terms of $\dim(G)$, so it suffices to bound the index of $\phi(\Gamma^{\circ})$ in the target. The group $\phi(\Gamma^{\circ})$ is contained in $\tau(\Gamma^{\circ}) \times \Gamma^{\tor}$, and the index can be bounded in terms of $\dim(G)$. It thus suffices to bound the index of $\tau(\Gamma^{\circ})$ in $\cG^{\ad}(\cO)$. We have $\tau(\Gamma^{\circ})=\sigma(\cG^{\sconn}(\cO))$, so we must control the index of the image of the map $\cG^{\sconn} \to \cG^{\ad}$ on $\cO$ points. Since this map is a surjection of group schemes with finite \'etale kernel $\cZ$, it follows that the index is bounded by the order of $\rH^1_{\rm et}(\Spec(\cO), \cZ)$. This can be bounded in terms of the order of $\cZ$, and thus in terms of $\dim(G)$.
\end{proof}
Theorem~\ref{totalimage} now follows by combining Proposition~\ref{prop:hyper}, Theorem~\ref{thm:torimg}, and Larsen's theorem (\cite[Thm.~3.17]{larsen}) on hyperspecial image.

\subsection{Some representation theory}

Let $k$ be a field, let $G/k$ be a connected semi-simple group, and let $V$ be an algebraic representation of $G_{\ol{k}}$. Pick a maximal torus $T$ in $G_{\ol{k}}$. For a weight $\lambda$ of $T$, we define $\Vert \lambda \Vert$ to be the maximum of $\vert \langle \lambda, \alpha^{\vee} \rangle \vert$ as $\alpha$ varies over the roots of $G_{\ol{k}}$. We define $\Vert V \Vert$ to be the maximum of $\Vert \lambda \Vert$ over those weights appearing in $V$. This is independent of the choice of $T$. If $k$ has characteristic~0 then (for fixed $\dim(G)$) one can bound $\dim(V)$ in terms of $\Vert V \Vert$, and vice versa.

Now suppose that $G/k$ is a torus and $\lambda$ is a character of $G_{\ol{k}}$. Define $\Vert \lambda \Vert$ to be $\# \pi_0(\ker(\lambda))$. For a representation $V$ of $G_{\ol{k}}$ define $\Vert V \Vert$ to be the maximum of $\Vert \lambda \Vert$ over characters $\lambda$ appearing in $V$.

Finally, suppose that $G/k$ is a reductive group and $V$ is a representation of $G_{\ol{k}}$. We define $\Vert V \Vert_{\der}$ to be the norm of $V$ restricted to $G^{\der}$ and $\Vert V \Vert_{\cen}$ to be the norm of $V$ restricted to the central torus of $G^{\circ}$.

\begin{proposition} \label{prop:inv1}
Let $\cO$ be a complete DVR with fraction field $K$ and residue field $k$, let $\cG/\cO$ be a reductive group, and let $\cV$ be a representation of $\cG$ over $\cO$. Suppose that $\dim(\cG)$, $\dim(\cV)$, and $\# \pi_0(G)$ are small compared to $\chr(k)$. Then the natural map $(\cV^{\cG})_k \to \cV_k^{\cG_k}$ is an isomorphism.
\end{proposition}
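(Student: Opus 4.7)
The plan is to treat injectivity and surjectivity of the natural map $(\cV^\cG)_k \to \cV_k^{\cG_k}$ separately.

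For injectivity, realize $\cV^\cG$ as the equalizer of the coaction $\cV \to \cV \otimes_\cO \cO[\cG]$ and the map $v \mapsto v \otimes 1$; then $\cV / \cV^\cG$ embeds into the $\cO$-flat module $\cV \otimes_\cO \cO[\cG]$ and is therefore $\cO$-torsion free. Consequently $\cV^\cG$ is saturated in $\cV$, so $(\cV^\cG)_k$ injects into $\cV_k$ and \emph{a fortiori} into $\cV_k^{\cG_k}$.

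For surjectivity, first reduce to the case where $\cG$ is connected. Since $\#\pi_0(\cG)$ is prime to $\ell \coloneq \chr(k)$, the averaging idempotent $e = \frac{1}{\#\pi_0(\cG)}\sum_{\gamma \in \pi_0(\cG)} \gamma \in \cO[\pi_0(\cG)]$ is defined integrally, so taking $\pi_0(\cG)$-invariants is exact and commutes with base change to $k$. Granting the conclusion for $\cG^\circ$, the isomorphism $(\cV^{\cG^\circ})_k \simeq \cV_k^{\cG_k^\circ}$ is $\pi_0(\cG)$-equivariant, and applying $e$ on both sides yields $(\cV^\cG)_k \simeq \cV_k^{\cG_k}$.

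Now assume $\cG$ is connected reductive. I would argue via rational (Hochschild) group cohomology $\rH^i(\cG,-)$, with $\rH^0 = (-)^\cG$. The short exact sequence $0 \to \cV \xrightarrow{\pi} \cV \to \cV_k \to 0$ (with $\pi$ a uniformizer of $\cO$) yields, on taking the associated long exact sequence, a four-term exact sequence
\[
0 \to \cV^\cG \otimes_\cO k \to \cV_k^{\cG_k} \to \rH^1(\cG, \cV)[\pi] \to 0,
\]
where we have identified $\rH^0(\cG, \cV_k) = \rH^0(\cG_k, \cV_k) = \cV_k^{\cG_k}$ (the $\cG$-action on $\cV_k$ factors through $\cG_k$, so the cobar complexes coincide). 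Thus surjectivity is equivalent to the vanishing $\rH^1(\cG, \cV)[\pi] = 0$; by the analogous universal coefficients sequence in degree one, finite generation of $\rH^1(\cG, \cV)$ over $\cO$, and Nakayama's lemma, this reduces in turn to the vanishing $\rH^1(\cG_k, \cV_k) = 0$.

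The main obstacle, and the only point where the hypothesis on $\chr(k)$ really enters, is this last cohomology vanishing. The plan is to invoke Jantzen-type semisimplicity results: when $\ell = \chr(k)$ is large compared to $\dim \cG$ and to the weights appearing in $\cV_k$ (themselves bounded in terms of $\dim \cV$), every composition factor of $\cV_k$ lies in the first $\ell$-alcove, the subcategory of $\cG_k$-modules these factors generate is semisimple, and hence $\Ext^1_{\cG_k}(\mathbf{1}, \cV_k) = \rH^1(\cG_k, \cV_k) = 0$. Making the ``small compared to $\chr(k)$'' quantifier effective --- in terms of the Coxeter number and the weight bounds derived from $\dim \cV$ --- is the technical heart of the argument.
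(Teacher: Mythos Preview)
Your approach and the paper's share the same core: both reduce surjectivity to the vanishing $\rH^1(\cG_k,\cV_k)=0$, established via semisimplicity of low-weight representations in large characteristic (the paper cites \cite[Prop.~3.3,~3.5]{bigness} for exactly the Jantzen-type input you describe). Your treatment of injectivity and of the passage to the connected component are fine, and indeed more explicit than the paper's.

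The difference is in how the vanishing over $k$ is converted into surjectivity of $(\cV^\cG)_k\to\cV_k^{\cG_k}$. You argue: $\rH^1(\cG_k,\cV_k)=0$ forces $\rH^1(\cG,\cV)/\pi=0$, and then finite generation plus Nakayama give $\rH^1(\cG,\cV)=0$, hence $\rH^1(\cG,\cV)[\pi]=0$. The weak link is the finite generation of $\rH^1(\cG,\cV)$ over $\cO$. The cobar complex has terms $\cV\otimes_\cO\cO[\cG]^{\otimes n}$ of infinite $\cO$-rank, and I do not see an a priori reason the Hochschild cohomology of a reductive group scheme over $\cO$ in a finite $\cO$-module should be finitely generated. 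Without it, $\pi$-divisibility does not kill $\pi$-torsion: the module $K/\cO$ is $\pi$-divisible and torsion yet has nontrivial $\pi$-torsion, and one knows only that $\rH^1(\cG,\cV)$ is torsion (since $\rH^1(\cG_K,\cV_K)=0$) and $\pi$-divisible.

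The paper sidesteps this entirely. Given $v_1\in\cV_k^{\cG_k}$, it lifts inductively to $\cG$-invariant classes $v_n\in\cV/\pi^n\cV$; the obstruction to each lift is a $\cV_k$-torsor, classified by an element of $\rH^1(\cG_k,\cV_k)=0$. Completeness of $\cO$ then assembles the $v_n$ into a $\cG$-invariant element of $\cV$. This uses only the vanishing over $k$ and never needs any finiteness of $\rH^1(\cG,\cV)$ as an $\cO$-module. Your argument becomes correct if you replace the Nakayama step by this successive-approximation argument, or equivalently if you show $\rH^1(\cG,\cV/\pi^n)=0$ for all $n$ by induction on $n$ and then pass to the inverse limit.
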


%

\begin{proof}
The map in question is injective, and we must prove it is surjective. That is, given a $\cG_k$-invariant vector $v_1$ in $\cV_k$, we must produce a lift to a $\cG$-invariant vector in $\cV$. Suppose that $v_n$ is a lift of $v_1$ to a $\cG$-invariant vector in $\cV/\pi^n \cV$, where $\pi$ is a uniformizer of $\cO$. Let $X$ be the fiber of the reduction map $\cV/\pi^{n+1} \cV \to \cV/\pi^n \cV$ over $v_n$. Since $v_n$ is invariant, $X$ is stable for the action of $\cG$, and is naturally a torsor for $\cV_k$. Thus $X$ is classified by an element of $\rH^1(\cG_k, \cV_k)$. (This cohomology is algebraic group cohomology. Explicitly, let $\wt{v}$ be a lift of $v_n$. Then $g \mapsto g\wt{v}-\wt{v} \in \pi^n \cV/\pi^{n+1} \cV \cong \cV_k$ is a representating cocycle.) We claim that this cohomology group vanishes. First, the representation $\cV_k$ of $\cG_k^{\circ}$ has small norm (see, for instance, \cite[Prop.~3.5]{bigness}). An element of $\rH^1(\cG^{\circ}_k, \cV_k)$ is represented by an extension of $\cV_k$ by the trivial representation. However, such an extension still has small norm, and is therefore semi-simple (see, for instance, \cite[Prop.~3.3]{bigness}). Thus $\rH^1(\cG^{\circ}_k, \cV_k)$ vanishes. The map
\begin{displaymath}
\rH^1(\cG_k, \cV_k) \to \rH^1(\cG^{\circ}_k, \cV_k)
\end{displaymath}
is injective, since $\# \pi_0(G)$ is coprime to $\chr(k)$, and so $\rH^1(\cG_k, \cV_k)$ vanishes. Thus $X$ is a trivial torsor, and therefore has a fixed point $v_{n+1}$. We thereby obtain a compatible sequence $(v_n)_{n \ge 0}$ of $\cG$-invariant elements which defines the requisite $\cG$-invariant element in $\cV$.
\end{proof}

\begin{proposition} \label{prop:inv2}
Let $k$ be a finite field, let $G/k$ be a reductive group, and let $\Gamma \subset G(k)$ be a subgroup of small index (compared to $\chr(k)$) that meets every geometric component. Let $V$ be a representation of $G$ over $\ol{k}$. Assume that $\dim(G)$, $\Vert V \Vert_{\der}$, and $\Vert V \Vert_{\cen}$ are small compared to $\chr(k)$. Then $V^G=V^{\Gamma}$.
\end{proposition}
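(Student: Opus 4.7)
The inclusion $V^G \subseteq V^\Gamma$ is automatic; the goal is the reverse. I first reduce to $G$ connected: setting $\Gamma^\circ = \Gamma \cap G^\circ(k)$, the assumption that $\Gamma$ meets every geometric component forces $\pi_0(G)$ to be constant over $k$ with $\Gamma/\Gamma^\circ \twoheadrightarrow \pi_0(G)$; since $V^G = (V^{G^\circ})^{\pi_0(G)}$ and $V^\Gamma = (V^{\Gamma^\circ})^{\Gamma/\Gamma^\circ}$, and the $\Gamma/\Gamma^\circ$-action on $V^{G^\circ}$ factors through and surjects onto $\pi_0(G)$, it suffices to prove $V^{G^\circ} = V^{\Gamma^\circ}$. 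Next I reduce to $V$ irreducible: by the small-norm hypothesis and results of the style of \cite[Props.~3.3 and 3.5]{bigness}, $V$ is a semisimple $G^\circ$-module, so it is enough to treat each irreducible summand; for a nontrivial irreducible $V$, the goal becomes $V^{\Gamma^\circ} = 0$.

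Write $Z = Z(G^\circ)^\circ$ for the central torus; using the almost-direct-product $G^\circ = Z \cdot G^{\der}$, an irreducible $V$ decomposes as $V = U \otimes \chi$ with $U$ an irreducible $G^{\der}$-module and $\chi$ a character of $Z$, and nontriviality means $\chi \ne 1$ or $U \ne 1$. First handle $\chi \ne 1$. The set-theoretic map $Z(k) \to G^\circ(k)/\Gamma^\circ$ has kernel $\Gamma^\circ \cap Z(k)$, giving the elementary bound $[Z(k) : \Gamma^\circ \cap Z(k)] \le [G^\circ(k) : \Gamma^\circ]$, which is small. Applying Lang's theorem to $1 \to \ker\chi \to Z \to \bG_m \to 1$ yields $[k^\times : \chi(Z(k))] \le \#H^1(k, \ker\chi)$, which is bounded in terms of $\#\pi_0(\ker\chi) = \Vert\chi\Vert \le \Vert V \Vert_{\cen}$ and is therefore small. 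So $\chi(\Gamma^\circ \cap Z(k))$ has small index in $k^\times$; since $|k^\times| \ge \chr(k) - 1$ is large, $\chi|_{\Gamma^\circ \cap Z(k)}$ is a nontrivial character, forcing $V^{\Gamma^\circ \cap Z(k)} = 0$ and hence $V^{\Gamma^\circ} = 0$.

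In the remaining case ($\chi = 1$, $U \ne 1$), $V$ descends to a nontrivial irreducible representation of the connected semisimple group $H := G^\circ/Z$. The exact sequence $G^\circ(k) \to H(k) \to H^1(k, Z)$ together with the standard bound on $\#H^1(k, Z)$ (small by the norm hypothesis) shows the image $\bar\Gamma \subset H(k)$ of $\Gamma^\circ$ has small index in $H(k)$. The proof thereby reduces to the following semisimple-case statement, which is the main obstacle: for a connected semisimple group $H$ over a large-characteristic finite field $k$, a small-index subgroup $\bar\Gamma \subset H(k)$, and a nontrivial irreducible $H$-representation $V$ of small norm, one has $V^{\bar\Gamma} = 0$. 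This should follow from the representation theory of finite groups of Lie type in defining characteristic along the lines of \cite[\S 3]{bigness} or \cite[Appendix]{adequate}: $\bar\Gamma$ still generates enough of $H(k)$ that $\bar\Gamma$-invariants agree with $H(k)$-invariants, and the latter vanish on a nontrivial small-norm irreducible by standard weight-theoretic arguments. All other steps are bookkeeping using Lang's theorem and the structure of connected reductive groups.
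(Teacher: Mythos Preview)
Your overall architecture matches the paper's: reduce to connected $G$, then handle the central torus and the semisimple part separately. Your reduction to the connected case and to irreducible $V$ is more carefully spelled out than in the paper, which simply says ``we can treat separately the cases where $G$ is a torus and $G$ is semi-simple.''

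There is, however, a genuine gap in your torus argument. The character $\chi$ is a character of $Z_{\ol{k}}$, not of $Z$: the representation $V$ is only given over $\ol{k}$, and there is no reason for the central character of an irreducible constituent to be $k$-rational. Consequently the sequence $1 \to \ker\chi \to Z \to \bG_m \to 1$ need not be defined over $k$, $\chi(Z(k))$ lands in $\ol{k}^\times$ rather than $k^\times$, and neither Lang's theorem nor the Galois cohomology group $H^1(k,\ker\chi)$ makes sense as written. The paper sidesteps this by factoring $\chi$ (over $\ol{k}$) as $G_{\ol{k}} \xrightarrow{p} \bG_m \xrightarrow{n} \bG_m$ with $p$ a projection onto a direct factor and $|n|=\Vert\chi\Vert$; one then argues that $p(\Gamma)$ is a large cyclic subgroup of $\ol{k}^\times$, so raising to the small power $n$ cannot kill it. You should either adopt this factorization or otherwise address the non-rationality of $\chi$.

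For the semisimple case, the paper's argument is cleaner than the appeal you sketch: it passes to the simply connected cover $G^{\sconn}$, where the key structural fact is that $G^{\sconn}(k)$ has \emph{no} proper subgroups of small index (for large $\chr(k)$), so the preimage of $\bar\Gamma$ is forced to be all of $G^{\sconn}(k)$, reducing to the known equality $V^{G^{\sconn}}=V^{G^{\sconn}(k)}$. Since your $H=G^{\ad}$ can have small-index subgroups (e.g.\ $\mathrm{PSL}_n \subset \mathrm{PGL}_n$), this simply-connected reduction is the right way to finish, and it is worth making explicit rather than deferring to references.
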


\begin{proof}
It suffices to treat the case where $G$ is connected. And we can treat separately the cases where $G$ is a torus and $G$ is semi-simple. We first treat the case where $G$ is semi-simple. It suffices to treat the case where $G$ is simply connected. But note then that $G(k)$ has no small index subgroups, and so $\Gamma=G(k)$. The result then follows from the fact that $V^G=V^{G(k)}$, which relies on $\dim(G)$ and $\Vert V \Vert_{\der}$ being small compared to $\chr(k)$ (see, for instance, \cite[Prop.~3.6]{bigness}).

We now treat the case where $G$ is a torus. It suffices to treat the case where $V$ is one-dimensional. Let $\lambda$ be the character giving the action of $G$. It suffices to treat the case where $\lambda$ is non-trivial, and show that $\lambda \vert_{\Gamma}$ is non-trivial. Note that $\lambda$ factors as $G_{\ol{k}} \stackrel{p}{\to} \bG_m \stackrel{n}{\to} \bG_m$ where $p$ is the projection onto a direct factor and $\vert n \vert = \Vert \lambda \Vert$ is small by the assumption on $\Vert V \Vert_{\cen}$. Since $p(\Gamma)$ has small index in $\bG_m(\ol{k})$, so does $p(\Gamma)^n$, and so $\lambda$ is non-trivial on $\Gamma$.
\end{proof}

\subsection{A general irreducibility result}

We return to the setting of \S \ref{ss:hyper}.

\begin{theorem} \label{genirred}
Let $G/K$ be a reductive group with $G^{\circ}$ unramified, and let $\Gamma \subset G(K)$ be a Zariski dense profinite group such that $\sigma^{-1}(\tau(\Gamma^{\circ}))$ is hyperspecial. Assume that $\Gamma^{\tor}$ is open in $G^{\tor}(K)$ and its index in the maximal compact is small compared to $\chr(k)$. Also assume that $\dim(G)$ and $\# \pi_0(G)$ are small compared to $\chr(k)$.

Let $(\rho, V)$ be a representation of $G$ over $\ol{K}$ with $\dim(V)$ small compared to $\chr(k)$. Then there exists a $\Gamma$-stable lattice $\cV$ in $V$ such that the natural map
\begin{displaymath}
\End_{\Gamma}(\cV) \otimes \ol{k} \to \End_{\Gamma}(\cV_{\ol{k}})
\end{displaymath}
is an isomorphism.
\end{theorem}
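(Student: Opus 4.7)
My plan is to produce an integral model $\cG/\cO$ of $G$ containing $\Gamma$ with bounded index, choose a $\cG$-stable lattice $\cV$ in $V$, and then chain together Propositions~\ref{prop:inv1} and~\ref{prop:inv2} to identify $\End_{\Gamma}(\cV) \otimes \ol{k}$ with $\End_{\Gamma}(\cV_{\ol{k}})$ through the common intermediate $(\End \cV)^{\cG}$ and its reduction $(\End \cV_{\ol{k}})^{\cG_{\ol{k}}}$.

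First, Proposition~\ref{prop:hyper} applied to our data produces a reductive group scheme $\cG/\cO$ with generic fiber $G$ such that $\Gamma \subset \cG(\cO)$ is open of index bounded in terms of $\dim(G)$ and $[\text{max\ compact}\ G^{\tor}(K) : \Gamma^{\tor}]$, which are small by hypothesis. After replacing $K$ by a finite extension over which $(\rho,V)$ is defined (and replacing $\cO$, $\cG$ correspondingly, without affecting any of the hypotheses), standard tannakian arguments---e.g.\ taking the $\cG(\cO)$-saturation of any $\cO$-lattice using the Hopf algebra $\cO[\cG]$---produce a $\cG$-stable lattice $\cV \subset V$.

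Next, since $\Gamma$ is Zariski dense in $G$, one has $\End_{\Gamma}(V)=\End_{G}(V)$, and restricting to the lattice gives
\[
\End_{\Gamma}(\cV) \;=\; \End_{\cG}(\cV) \;=\; (\End \cV)^{\cG}.
\]
Applying Proposition~\ref{prop:inv1} to $\cG$ acting on $\End(\cV)=\cV \otimes \cV^{\vee}$ (whose dimension $\dim(V)^{2}$ is still small compared to $\chr(k)$) yields $(\End \cV)^{\cG} \otimes k \xrightarrow{\sim} (\End \cV_{k})^{\cG_{k}}$, and extending scalars to $\ol{k}$ gives
\[
\End_{\Gamma}(\cV) \otimes \ol{k} \;\cong\; (\End \cV_{\ol{k}})^{\cG_{\ol{k}}}.
\]

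Finally, apply Proposition~\ref{prop:inv2} to $\cG_{k}/k$, the reduction $\ol{\Gamma} \subset \cG(k)$ of $\Gamma$, and the representation $\End(\cV_{\ol{k}})$. The subgroup $\ol{\Gamma}$ has small index in $\cG(k)$ since $\Gamma$ does in $\cG(\cO)$; it meets every geometric component of $\cG_{k}$ because Zariski density forces $\Gamma$ to surject onto the étale component group $\pi_{0}(\cG)=\pi_{0}(G)$. Weights of $\End V$ are differences of weights of $V$, so $\Vert \End \cV_{\ol{k}} \Vert_{\der}$ is bounded by $2\Vert V \Vert_{\der}$ and hence by a function of $\dim(V)$ and $\dim(G)$; the analogous bound on $\Vert \End \cV_{\ol{k}} \Vert_{\cen}$ is where the hypothesis on $[\text{max\ compact}\ G^{\tor}(K) : \Gamma^{\tor}]$ gets used, since otherwise central characters could have arbitrarily large $\pi_{0}(\ker(\cdot))$. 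Proposition~\ref{prop:inv2} then yields $(\End \cV_{\ol{k}})^{\cG_{\ol{k}}} = \End(\cV_{\ol{k}})^{\ol{\Gamma}} = \End_{\Gamma}(\cV_{\ol{k}})$, and combining with the previous display finishes the proof.

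The main obstacle is the central-torus norm bound needed for the final application of Proposition~\ref{prop:inv2}: the semisimple estimate is automatic from $\dim(V)$ and $\dim(G)$ being small, but ensuring that $\Vert \End \cV_{\ol{k}}\Vert_{\cen}$ is small must genuinely exploit the index hypothesis on $\Gamma^{\tor}$ (this is precisely the content of Theorem~\ref{thm:torimg} in the intended applications). A lesser but real subtlety is the construction of the $\cG$-stable lattice after base change, which should be standard but merits a careful reference.
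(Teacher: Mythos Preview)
Your proposal is correct and follows essentially the same approach as the paper: apply Proposition~\ref{prop:hyper} to get $\cG/\cO$, pick a $\cG$-stable lattice (the paper cites \cite[\S 1.12]{larsen} for this), and then chain Zariski density, Proposition~\ref{prop:inv1}, and Proposition~\ref{prop:inv2} applied to $\cE=\End(\cV)$. One small misattribution in your closing paragraph: the hypothesis on the index of $\Gamma^{\tor}$ does \emph{not} furnish the bound on $\Vert \cE \Vert_{\cen}$---that bound and the small-index hypothesis are two separate inputs to Proposition~\ref{prop:inv2}, and the paper obtains the norm bound by citing \cite[Prop.~3.5]{bigness} rather than from anything about $\Gamma^{\tor}$.
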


\begin{corollary}
Suppose that $\rho$ decomposes as $\bigoplus_{i=1}^r \rho_i^{\oplus m_i}$ where the $\rho_i$ are pairwise non-isomorphic irreducible representations. Then the $\ol{\rho}_i$ are irreducible and pairwise non-isomorphic and (for an appropriate choice of lattice) $\ol{\rho}$ decomposes as $\bigoplus_{i=1}^r \ol{\rho}_i^{\oplus m_i}$.
\end{corollary}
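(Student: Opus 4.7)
The plan is to apply Theorem~\ref{genirred} to $V$ with a carefully chosen $\cG$-stable lattice, and then to read off all three assertions (direct-sum decomposition, pairwise non-isomorphy, and irreducibility) from the resulting isomorphism of endomorphism algebras.

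First I would fix a $\cG$-stable lattice $L_i$ in each $\rho_i$ and set $\cV := \bigoplus_{i=1}^r L_i^{\oplus m_i}$. Since each $\rho_i$ is absolutely irreducible and $\Gamma$ is Zariski dense in $G$, Schur's lemma forces $\End_\Gamma(L_i) = \cO$, and so
\begin{displaymath}
\End_\Gamma(\cV) \;=\; \prod_{i=1}^r M_{m_i}(\cO), \qquad \End_\Gamma(\cV) \otimes_\cO \ol{k} \;=\; \prod_{i=1}^r M_{m_i}(\ol{k}).
\end{displaymath}
The proof of Theorem~\ref{genirred} via Propositions~\ref{prop:inv1} and~\ref{prop:inv2} applies to any $\cG$-stable lattice, so applied to $\cV$ it yields an isomorphism of $\ol{k}$-algebras $\End_\Gamma(\cV_{\ol{k}}) \cong \prod_{i} M_{m_i}(\ol{k})$.

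Now set $\ol{\rho}_i := L_{i,\ol{k}}$. By construction $\cV_{\ol{k}} = \bigoplus_i \ol{\rho}_i^{\oplus m_i}$ as $\Gamma$-representations, which is the desired decomposition of $\ol{\rho}$. Under this identification the endomorphism algebra decomposes as $\End_\Gamma(\cV_{\ol{k}}) = \bigoplus_{i,j} \mathrm{Mat}_{m_j \times m_i}(\Hom_\Gamma(\ol{\rho}_i, \ol{\rho}_j))$, and matching this against $\prod_i M_{m_i}(\ol{k})$ forces $\Hom_\Gamma(\ol{\rho}_i, \ol{\rho}_j) = 0$ for $i \neq j$ (so the $\ol{\rho}_i$ are pairwise non-isomorphic) and $\End_\Gamma(\ol{\rho}_i) = \ol{k}$ for each $i$.

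The main obstacle is deducing irreducibility of each $\ol{\rho}_i$. The condition $\End_\Gamma(\ol{\rho}_i) = \ol{k}$ alone does not suffice — a non-split extension of two distinct characters is a counterexample — so I would exploit the $\cG_{\ol{k}}$-structure inherited from the $\cG$-lattice $L_i$. Because $\dim \rho_i$ is small compared to $\chr(k)$, the theory of small representations of reductive groups in positive characteristic (cf.\ \cite[Prop.~3.3]{bigness}) implies that $\ol{\rho}_i$ is semisimple as a $\cG_{\ol{k}}$-module; combined with the identification $\End_{\cG_{\ol{k}}}(\ol{\rho}_i) = \End_\Gamma(\ol{\rho}_i) = \ol{k}$ furnished by Proposition~\ref{prop:inv2}, this forces $\ol{\rho}_i$ to be an irreducible $\cG_{\ol{k}}$-module. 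Since by Proposition~\ref{prop:hyper} the image of $\Gamma$ in $\cG_{\ol{k}}(\ol{k})$ is Zariski dense in $\cG_{\ol{k}}$, every $\Gamma$-stable subspace of $\ol{\rho}_i$ is automatically $\cG_{\ol{k}}$-stable, and irreducibility as a $\Gamma$-representation follows.
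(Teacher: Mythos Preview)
The paper states this corollary without proof, treating it as an immediate consequence of Theorem~\ref{genirred}; your argument correctly fills in the details, and the choice of lattice $\cV=\bigoplus_i L_i^{\oplus m_i}$ with $\cG$-stable $L_i$ is exactly the ``appropriate choice of lattice'' the statement alludes to. You are also right to flag that $\End_{\Gamma}(\ol{\rho}_i)=\ol{k}$ alone does not yield irreducibility, and your route through $\cG_{\ol{k}}$-semisimplicity (small norm) and then Zariski density is sound.

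One small imprecision: Proposition~\ref{prop:hyper} does not itself assert that the reduction of $\Gamma$ is Zariski dense in $\cG_{\ol{k}}$. What it gives is that $\Gamma$ has small index in $\cG(\cO)$, hence its reduction $\ol{\Gamma}$ has small index in $\cG(k)$. You then need the (standard) fact that $\cG^{\circ}(k)$ is Zariski dense in $\cG^{\circ}_{\ol{k}}$ for a connected reductive group over a finite field, so that any finite-index subgroup of it is as well; together with the observation that $\ol{\Gamma}$ meets every geometric component (since $\Gamma$ surjects onto $\pi_0(G)=\pi_0(\cG)$), this yields the Zariski density you invoke. Make that deduction explicit rather than citing Proposition~\ref{prop:hyper} alone.
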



\begin{proof}[Proof of theorem]
By Proposition~\ref{prop:hyper}, we can find a reductive group $\cG/\cO$ with generic fiber $G$ such that $\Gamma$ is an open subgroup of $\cG(\cO)$ with small index. Descend $V$ to a representation $V'$ over a finite extension $K'/K$, and let $\cV$ be a $\cG$-stable lattice in $V'$ (see \cite[\S 1.12]{larsen} for a proof of existence). Let $\cE=\End(\cV)$, which is also a representation of small norm (again by \cite[Prop.~3.5]{bigness}). We have
\begin{displaymath}
(\cE^{\Gamma})_{\ol{k}} = (\cE^{\cG})_{\ol{k}} = (\cE_{\ol{k}})^{\cG_{\ol{k}}} = (\cE_{\ol{k}})^{\Gamma}
\end{displaymath}
The first equality follows from the fact that $\Gamma$ and $\cG$ have the same Zariski closure. The second is Proposition~\ref{prop:inv1} (working over $\cO^{\rm un}_{K'}$), while the third is Proposition~\ref{prop:inv2}.
\end{proof}

\subsection{Proof of the main theorem}
Let $\{\rho_v\}_{v \in \Sigma}$ be a $d$-dimensional compatible system of representations of $\Gamma_F$ with coefficients in $E$, where $\Sigma=\Sigma(\Pi)$. For $\ell \in \Pi$ define $\rho_{\ell} = \bigoplus_{v \mid \ell} \rho_v$. We regard $\rho_{\ell}$ as a representation of $\Gamma_F$ over the field $\bQ_{\ell}$ of dimension $d \cdot [E:\bQ_{\ell}]$. As such, $\{\rho_{\ell}\}_{\ell \in \Pi}$ forms a compatible system with coefficients in $\bQ$. Let $\Pi' \subset \Pi$ be a density one subset for which Larsen's theorem on hyperspecial image holds, and fix $\ell \in \Pi'$. Let $G$ be the Zariski closure of the image of $\rho_{\ell}$. Then we can regard $\rho_v$ as an algebraic representation of $G$ over the field $E_v$. The result now follows from Theorem~\ref{genirred}, which we may apply since by Larsen's theorem $G^\circ$ is unramified, by Theorem~\ref{thm:torimg} the image of $\Gamma^{\tor}$ in the maximal compact of $G^{\tor}(\bQ_{\ell})$ is bounded independently of $\chr(k)$, and by Serre's theorem the component group $\pi_0(G_{\ell})$ is independent of $\ell$.
\qed

\subsection{Further results}

In \cite{blggt}, a somewhat stronger result is needed, and proven, for application to potential automorphy theorems for Hodge--Tate regular compatible systems. We can prove the general version of this as well:
\begin{corollary}
Let $\{\rho_v\}_{v \in \Sigma}$ be a compatible system of representations of $\Gamma_F$ with coefficients in a number field $E$, where $\Sigma= \Sigma(\Pi)$ for some set $\Pi$ of rational primes of density 1. Suppose that each $\rho_v$ is absolutely irreducible. Then there exists a density 1 subset $\Pi' \subset \Pi$ such that for all $v \in \Sigma'= \Sigma(\Pi')$, lying above $\ell \in \Pi'$, the restriction $\ol{\rho}_v|_{\Gamma_F(\zeta_{\ell})}$ is absolutely irreducible.
\end{corollary}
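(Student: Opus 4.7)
The plan is to combine Theorem~\ref{thm:irr} with a Clifford-theoretic analysis of self-twists. By Theorem~\ref{thm:irr}, there is a density-$1$ subset $\Pi_0 \subset \Pi$ on which $\ol{\rho}_v$ is absolutely irreducible. It remains to refine this to a density-$1$ subset $\Pi' \subset \Pi_0$ on which $\ol{\rho}_v|_{\Gamma_{F(\zeta_\ell)}}$ is also absolutely irreducible.

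Since $[\Gamma_F : \Gamma_{F(\zeta_\ell)}]$ divides $\ell - 1$ and is in particular coprime to $\ell$, Clifford theory applies to the restriction. Combined with the absolute irreducibility of $\ol{\rho}_v$, it gives that $\ol{\rho}_v|_{\Gamma_{F(\zeta_\ell)}}$ is absolutely irreducible if and only if there is no integer $k \not\equiv 0 \pmod{\ell-1}$ with $\ol{\rho}_v \cong \ol{\rho}_v \otimes \ol{\chi}_\ell^{\,k}$, where $\chi_\ell$ denotes the cyclotomic character. Indeed, such an isomorphism produces a non-scalar endomorphism of $\ol{\rho}_v|_{\Gamma_{F(\zeta_\ell)}}$; conversely, by Clifford, reducibility of $\ol{\rho}_v|_{\Gamma_{F(\zeta_\ell)}}$ forces $\ol{\rho}_v$ to be induced from an intermediate proper subgroup $I$ with $\Gamma_{F(\zeta_\ell)} \subseteq I \subsetneq \Gamma_F$, and the nontrivial character of the cyclic quotient $\Gamma_F/I$, pulled back to $\Gamma_F$, then yields the required twist.

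Comparing traces, a self-twist $\ol{\rho}_v \cong \ol{\rho}_v \otimes \ol{\chi}_\ell^{\,k}$ would force $\lambda \mid \tr \rho_v(\Frob_w)$---with $\lambda$ a prime of $E_v$ above $\ell$---for every unramified place $w$ of $F$ satisfying $\#\kappa(w)^k \not\equiv 1 \pmod{\ell}$. Since the traces $\tr \rho_v(\Frob_w) \in E$ are independent of $v$ by compatibility, and are nonzero on a positive-density set of $w$ by absolute irreducibility of $\rho_v$, I would choose a finite collection of Frobenii $\Frob_{w_1}, \ldots, \Frob_{w_s}$ with $\tr \rho_v(\Frob_{w_i}) \ne 0$ and argue, using a finiteness-of-Hecke-characters device analogous to that underlying Proposition~\ref{prop:bd}, that a bad $(\ell, k)$ pair forces the prime $\lambda$ to divide one of the fixed nonzero algebraic numbers $\tr \rho_v(\Frob_{w_i})$, each of which is divisible by only finitely many primes of $E$.

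The hardest step is making this last argument quantitative: one must uniformly control the $\Frob_{w_i}$ so that, for each $\ell$ outside a density-$0$ exceptional set, no nonzero $k \pmod{\ell-1}$ can satisfy $\#\kappa(w_i)^k \equiv 1 \pmod{\ell}$ for every $i$. This amounts to exhibiting a fixed finite set of residue cardinalities that multiplicatively generate $\bF_\ell^\times$ for a density-$1$ set of~$\ell$; the finite set of algebraic Hecke characters $\cH$ from Section~\ref{torus} is the input that allows one to reduce this a priori infinite condition to a finite one. Removing the resulting exceptional set from $\Pi_0$ yields the desired density-$1$ subset $\Pi' \subset \Pi$, completing the proof.
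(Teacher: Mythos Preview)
Your Clifford-theoretic reduction is correct and is essentially the same as the paper's Frobenius reciprocity step: both reduce the corollary to showing $\Hom_{\Gamma_F}(\ol{\rho}_v,\ol{\rho}_v(k))=0$ for $k=1,\dots,\ell-2$, i.e.\ that $\ol{\rho}_v$ admits no nontrivial self-twist by a power of the mod-$\ell$ cyclotomic character.

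The gap is in how you propose to rule out such self-twists. Your trace argument shows that a bad pair $(\ell,k)$ forces either $\lambda\mid\tr\rho_v(\Frob_{w_i})$ for some $i$ (which excludes only finitely many $\ell$), or $\#\kappa(w_i)^k\equiv 1\pmod{\ell}$ for \emph{all} $i$. The latter condition, for a fixed finite set $w_1,\dots,w_s$, says exactly that the $\#\kappa(w_i)$ fail to generate $\bF_\ell^\times$. Asking that this fails only on a density-$0$ set of $\ell$ is a multi-generator Artin primitive root statement; even under GRH, for any fixed finite set of generators the density of such $\ell$ is strictly less than $1$, and unconditionally nothing close to this is known. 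The finite set $\cH$ of Hecke characters from \S\ref{torus} has no bearing on multiplicative orders of residue cardinalities modulo $\ell$, so invoking it here does not rescue the argument.

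The paper avoids this Artin-type obstruction entirely by transporting the question to characteristic~$0$, where it is trivial (since $\chi_\ell$ has infinite order, $\rho_v\not\cong\rho_v(i)$ for $i\neq 0$). The device is to enlarge the compatible system to $\rho'_\ell=\kappa_\ell\oplus\bigoplus_{v\mid\ell}\rho_v$, so that the cyclotomic twists $\rho_v(i)$ become algebraic representations of the enlarged monodromy group $G'\subset G\times\bG_m$. One then reruns the proof of Theorem~\ref{genirred} with $\cE_i=\Hom(\cV,\cV(i))$: Propositions~\ref{prop:inv1} and~\ref{prop:inv2} give $(\cE_i^{\Gamma'})_{\ol{k}}=(\cE_{i,\ol{k}})^{\Gamma'}$, the only new point being that $\Vert\cE_i\Vert_{\cen}$ is no longer small. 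But the central torus acts on $\cE_i$ through $\ol{\kappa}^i$ alone (as $\ol{\rho}_v$ is already irreducible), so one only needs $\ol{\kappa}^i$ nontrivial on $\Gamma_F$ for $i=1,\dots,\ell-2$, which holds once $[F(\zeta_\ell):F]=\ell-1$. This is what replaces your Artin-type input.
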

\proof
This time we consider the compatible system of $\Gamma_F$-representations with $\bQ$-coefficients $\rho_{\ell}'= \{\kappa_{\ell} \oplus \bigoplus_{v \mid \ell} \rho_{v}\}_{\ell}$, where $\kappa_{\ell}$ denotes the $\ell$-adic cyclotomic character. By Frobenius reciprocity, we must show that $\Hom_{\Gamma_F}(\ol{\rho}_v, \ol{\rho}_v(i))=0$ for all $i =1, \ldots, \ell-2$, for all $v$ lying above some density one set of rational primes $\ell$. Again let $\Pi' \subset \Pi$ be a density one subset for which Larsen's theorem on hyperspecial image holds, fix $\ell \in \Pi'$, and let $G' \subset G \times \bG_m$ be the Zariski closure of $\rho_{\ell}'$, with $G$ continuing to denote the Zariski closure of $\rho_{\ell}= \bigoplus_{v \mid \ell} \rho_v$. Both $\kappa_{\ell}$ and $\rho_v$ are algebraic representations of $G'$ over $E_v$, and so $\rho_v(i)$ is as well, for any integer $i$. We then follow the proof of Theorem \ref{genirred}, but without the hypothesis that $\Gamma^{', \tor}$ has `small' index in the maximal compact subgroup of $G^{', \tor}(\bQ_{\ell})$ ($\Gamma'$ here is taken to be $\rho'_{\ell}(\Gamma_F)$). Proposition~\ref{prop:hyper} produces a reductive group $\cG'/\bZ_{\ell}$ with generic fiber $G'$ such that $\Gamma'$ is an open subgroup of $\cG'(\bZ_{\ell})$. Let $\cV$ be a $\cG'$-stable lattice in $\rho_v$, and consider the representation $\cE_i= \Hom(\cV, \cV(i))$. Exactly as in the proof of Theorem \ref{genirred}, we have
\[
(\cE_i^{\Gamma'})_{\ol{k}}= (\cE_i^{\cG'})_{\ol{k}}= (\cE_{i, \ol{k}})^{\cG'_{\ol{k}}}.
\]

We now discard from $\Pi'$ the finitely many rational primes such that $[F(\zeta_{\ell}):F]$ is smaller than $\ell -1$, and claim that we can also deduce that $(\cE_{i, \ol{k}})^{\cG'_{\ol{k}}}= (\cE_{i, \ol{k}})^{\Gamma'}$ for $i=1, \ldots, \ell-2$. Indeed, all the hypotheses of Proposition \ref{prop:inv2} are still satisfied, with the possible exception that $\Vert \cE_i \Vert_{\cen}$ may not be small compared to $\chr(k)$. Following the proof of Proposition~\ref{prop:inv2}, however, we need only check that upon restricting $\cE_{i, \ol{k}}$ to the maximal central torus $Z$ of $\cG'_{\ol{k}}$, any irreducible (one-dimensional) constituent on which $Z$ acts non-trivially is also acted on non-trivially by the image of $\Gamma'$ in $Z$. Since $Z$ acts trivially on $\ol{\rho}_v^* \otimes \ol{\rho}_v$ ($\ol{\rho}_v$ being irreducible, by the main theorem), we are reduced to the assertion that $\ol{\kappa}^i$ is a non-trivial $\Gamma_F$-representation for $i=1, \ldots, \ell-2$, which holds by our restriction on $\ell$.

The corollary now results from the fact that $\Hom_{\Gamma_F}(\rho_v, \rho_v(i))=0$ for any $i \neq 0$.
\endproof
We likewise deduce the following application:
\begin{corollary}
Let $\{\rho_v\}_{v \in \Sigma}$ be a compatible system of representations with coefficients in $E$, where $\Sigma=\Sigma(\Pi)$ for $\Pi$ a set of rational primes of density 1. Suppose that each $\rho_v$ is absolutely Lie-irreducible, in the sense that for all finite extensions $L/F$, the restriction $\rho_v|_{\Gamma_L}$ remains absolutely irreducible. Then for any integer $d$, there is a density one subset $\Pi_d \subset \Pi$ such that for all $v \in \Sigma(\Pi_d)$, and all extensions $L/F$ of degree at most $d$, the restriction $\ol{\rho}_v|_{\Gamma_L}$ is absolutely irreducible. 
\end{corollary}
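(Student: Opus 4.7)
The plan is to apply Theorem~\ref{genirred} to the restriction $\rho_v|_{\Gamma_L}$ for each extension $L/F$ of degree at most $d$, verifying its hypotheses \emph{uniformly} in $L$ so that a single density-1 set of primes $\Pi_d$ works simultaneously for all such extensions. Following the notation of the proof of the main theorem, let $\rho_\ell = \bigoplus_{v \mid \ell} \rho_v$ with Zariski closure $G_\ell$, and pass to the density-1 subset $\Pi' \subset \Pi$ on which Larsen's theorem, Theorem~\ref{thm:torimg}, and Serre's theorem all hold. For $\ell \in \Pi'$ and any $L/F$ with $[L:F] \le d$, set $\Gamma = \rho_\ell(\Gamma_F)$ and $\Gamma' = \rho_\ell(\Gamma_L)$, an open subgroup of index at most $d$. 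Its Zariski closure $G'$ satisfies $G^{',\circ} = G_\ell^\circ$, whence $G^{',\sconn} = G_\ell^{\sconn}$ and $G^{',\tor} = G_\ell^\tor$, while $\pi_0(G')$ embeds in $\pi_0(G_\ell)$. By the Lie-irreducibility hypothesis, $\rho_v|_{\Gamma_L}$ remains absolutely irreducible.

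The key step is to verify that $\sigma^{-1}(\tau(\Gamma^{',\circ}))$ is hyperspecial in $G_\ell^{\sconn}(\bQ_\ell)$. Since $[\Gamma^\circ : \Gamma^{',\circ}] \le d$, it is an open subgroup of index at most $d$ in the hyperspecial $\cH(\bZ_\ell) = \sigma^{-1}(\tau(\Gamma^\circ))$. I claim that for $\ell$ sufficiently large (in terms of $d$ and the bounded dimension $\dim G_\ell$), $\cH(\bZ_\ell)$ admits no proper open subgroup of index at most $d$: the reduction $\cH(\bZ_\ell) \surjects \cH(\bF_\ell)$ has pro-$\ell$ kernel, so any proper open subgroup either has index divisible by $\ell$, or has index at least the minimal index of a proper subgroup of $\cH(\bF_\ell)$, which for $\cH$ simply connected semi-simple grows with $\ell$ (e.g.\ by Landazuri--Seitz). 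Hence for $\ell$ in a cofinite, still density-1 subset $\Pi_d \subset \Pi'$, we have $\sigma^{-1}(\tau(\Gamma^{',\circ})) = \cH(\bZ_\ell)$, hyperspecial, independently of $L$.

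The remaining hypotheses of Theorem~\ref{genirred}---openness and bounded index of $\Gamma^{',\tor}$ in the maximal compact of $G^{',\tor}(\bQ_\ell)$ (at most $dN$, using Theorem~\ref{thm:torimg}), small $\dim G'$, and small $\#\pi_0(G')$---all inherit from the original $\Gamma$ with at worst a multiplicative factor of $d$, hence remain small compared to $\ell$ for $\ell$ large. Applying Theorem~\ref{genirred} and its corollary to $\Gamma'$ acting on $\rho_v$ then yields the absolute irreducibility of $\ol{\rho}_v|_{\Gamma_L}$. The principal obstacle is exactly this uniformity in $L$: there are infinitely many $L$'s of degree at most $d$, and the intersection of the density-1 sets obtained by naively applying the main theorem to each need not be density 1. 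The group-theoretic bound on small-index subgroups of $\cH(\bZ_\ell)$ is what replaces that intersection by the removal of finitely many primes.
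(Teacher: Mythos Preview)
Your argument is correct, but it takes a different route from the paper's. You re-verify the full hypotheses of Theorem~\ref{genirred} for the subgroup $\Gamma' = \rho_\ell(\Gamma_L)$, the crux being a group-theoretic input---that a hyperspecial subgroup of $G_\ell^{\sconn}(\bQ_\ell)$ has no proper subgroup of index at most $d$ once $\ell$ is large---so that $\sigma^{-1}(\tau(\Gamma^{',\circ}))$ remains the \emph{same} hyperspecial group, after which Theorem~\ref{genirred} applies as a black box. The paper instead bypasses this re-verification: it invokes Corollary~\ref{totalimage} once to obtain the integral model $\cG_\ell/\bZ_\ell$ containing $\Gamma$ with index at most $N$, observes that $\rho_\ell(\Gamma_L) \cap \Gamma^\circ$ then sits in $\cG_\ell^\circ(\bZ_\ell)$ with index at most $Nd\,\#\pi_0(G_\ell)$, and feeds this uniform index bound directly into Propositions~\ref{prop:inv1} and~\ref{prop:inv2} for $\cE = \End(\cV)$ with a single $\cG_\ell$-stable lattice $\cV$. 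The Lie-irreducibility hypothesis enters differently as well: for you it merely says $\rho_v|_{\Gamma_L}$ is absolutely irreducible so the corollary to Theorem~\ref{genirred} applies, whereas in the paper it is used to identify $(\cE^{\rho_\ell(\Gamma_L) \cap \Gamma^\circ})$ with $(\cE^{\cG_\ell^\circ})$ (both equal to scalars), replacing the Zariski-density statement that would otherwise be needed. Your approach has the advantage of packaging everything through one clean invocation of Theorem~\ref{genirred}; the paper's approach avoids the extra fact about small-index subgroups of $\cH(\bF_\ell)$ and makes the uniformity in $L$ more transparently a matter of a single index bound.
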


\proof
As above, let $\{\rho_{\ell}= \bigoplus_{v \mid \ell} \rho_v \}_{\ell \in \Pi}$ be the associated compatible system with coefficients in $\bQ$. By Corollary~\ref{totalimage}, there is a positive integer $N$, a density one subset $\Pi' \subset \Pi$, and for each $\ell \in \Pi'$ a reductive group scheme $\cG_{\ell}/\bZ_{\ell}$ with generic fiber $G_{\ell}$ such that $\cG_{\ell}(\bZ_{\ell})$ contains $\Gamma= \rho_{\ell}(\Gamma_F)$ as an open subgroup of index at most $N$. For any $L/F$ satisfying $[L:F] \leq d$, it follows that $\cG_{\ell}(\bZ_{\ell})$ contains $\rho_{\ell}(\Gamma_L)$ with index at most $Nd$, and that it contains $\rho_{\ell}(\Gamma_L) \cap \Gamma^0$ with index at most $Nd \pi_0$, where $\pi_0$ is the order of the component group (independent of $\ell$ by Serre's theorem). Consider the representation $\rho_v$ over $E_v$ of the group $G_{\ell}$, and as before let $\cV$ be a $\cG_{\ell}$-stable lattice, and let $\cE= \End(\cV)$. Finally let $\ol{k}_v$ be an algebraic closure of the residue field of $E_v$. Provided $\ell$ is sufficiently large, we then have
\[
(\cE^{\rho_{\ell}(\Gamma_L) \cap \Gamma^0})_{\ol{k}_v}= (\cE^{\cG^0_{\ell}})_{\ol{k}_v}= (\cE_{\ol{k}_v})^{\cG^0_{\ell, \ol{k}_v}}= (\cE_{\ol{k}_v})^{\rho_{\ell}(\Gamma_L) \cap \Gamma^0}.
\]
The first equality holds because $\cV$ is absolutely Lie-irreducible, so the invariants are in both cases just the scalars. The second is again Proposition~\ref{prop:inv1}, and the third is again Proposition~\ref{prop:inv2}, invoking the uniform index bound.
\endproof

\end{document}